\numberwithin{equation}{section}
\theoremstyle{plain}
\newtheorem{thm}{Theorem}[section]
\newtheorem{theorem}[thm]{Theorem}
\newtheorem{lemma}[thm]{Lemma}
\newtheorem{proposition}[thm]{Proposition}
\theoremstyle{definition}
\newtheorem{remark}[thm]{Remark}
\newtheorem{definition}[thm]{Definition}
\newtheorem{problem}[thm]{Problem}
\numberwithin{equation}{section}
\newcommand{\sO}{{\mathcal O}}
\newcommand{\C}{{\mathbb C}}
\renewcommand{\P}{{\mathbb P}}
\newcommand{\Q}{{\mathbb Q}}
\newcommand{\R}{{\mathbb R}}
\newcommand{\Z}{{\mathbb Z}}
\newcommand{\id}{{\rm id\hspace{.1ex}}}
\newcommand{\Aut}{{\rm Aut\hspace{.1ex}}}
\newcommand{\Inv}{{\rm Inv\hspace{.1ex}}}
\title [Non-finitely generated automorphism group]{A surface with discrete and non-finitely generated\break automorphism group}
\author{Tien-Cuong Dinh}
\address{Department of Mathematics, National University of Singapore, 10, 
Lower Kent Ridge Road, Singapore 119076}
\email{matdtc@nus.edu.sg}
\author{Keiji Oguiso}
\address{Mathematical Sciences, the University of Tokyo, Meguro Komaba 3-8-1, Tokyo, Japan and Korea Institute for Advanced Study, Hoegiro 87, Seoul, 
133-722, Korea}
\email{oguiso@ms.u-tokyo.ac.jp}
\thanks{The first author is supported by NUS Start-Up Grant R-146-000-204-133 and AcRF Tier 1 Grant R-146-000-248-114. 
The second author is supported by JSPS Grant-in-Aid (S) No 25220701, JSPS Grant-in-Aid (S) 15H05738, JSPS Grant-in-Aid (B) 15H03611, and by KIAS Scholar Program. }
 \dedicatory{Dedicated to Professor Shigeyuki Kond\=o on the occasion of his sixtieth birthday}
\begin{document}

\maketitle

\begin{abstract}
We show that there is a smooth complex projective variety, of any dimension greater than or equal to two, whose automorphism group is discrete and not finitely generated. Moreover, this variety admits  infinitely many real forms which are mutually non-isomorphic over $\R$.  Our result is inspired by the work of Lesieutre and answers questions by 
Dolgachev, Esnault and Lesieutre. 
\end{abstract}

\section{Introduction}

In this paper, we work in the category of projective varieties defined over the complex number field $\C$. Our main result is the following theorem which is inspired by the work of Lesieutre and answers questions by  Dolgachev, Esnault and Lesieutre for surfaces, i.e., the cases where $d=2$ in Theorem \ref{thm1}, and their real forms. 

\begin{theorem}\label{thm1}
For each integer $d \ge 2$, there is a smooth projective variety $V$ of dimension $d$ and  of Kodaira dimension $d-2$ such that
\begin{enumerate}
\item the automorphism group ${\rm Aut}\, (V)$ is discrete, i.e., its identity component ${\rm Aut}^{0}\,(V)$ is $\{\id_V\}$; 
\item ${\rm Aut}\, (V)$ is not finitely generated; and
\item $V$ admits infinitely many real forms which are mutually non-isomorphic over $\R$. 
\end{enumerate}
\end{theorem}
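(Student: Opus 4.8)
The plan is to treat the surface case $d=2$ as the heart of the matter and to bootstrap from it. For $d=2$ I would exhibit a single smooth projective surface $S$ with $\kappa(S)=0$ carrying all three properties; for $d>2$ I would put $V=S\times W$ with $W$ a smooth projective variety of general type of dimension $d-2$, so that $\kappa(V)=\kappa(S)+\kappa(W)=d-2$. Since $W$ is of general type, $\Aut(W)$ is finite and $\Aut^0(W)=\{\id_W\}$, and because the two factors have different Kodaira dimensions one may invoke a splitting $\Aut(S\times W)=\Aut(S)\times\Aut(W)$. Discreteness then descends, as $\Aut^0(V)=\Aut^0(S)\times\Aut^0(W)$; non-finite generation descends because $\Aut(S)$ is a quotient of $\Aut(V)$ and a finitely generated group cannot surject onto a non-finitely generated one; and real forms of the product can be assembled from real forms of $S$. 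So everything reduces to constructing $S$.

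For $S$ I would, following Lesieutre, engineer a surface of Kodaira dimension zero (a suitable blow-up of an abelian surface, or a more elaborate surface built from elliptic curves) carrying a countable family of pairwise disjoint distinguished curves $\{C_i\}_{i\in I}$ of fixed negative self-intersection together with, for each $i$, an involution $\sigma_i\in\Aut(S)$ whose action is concentrated near $C_i$, the $\sigma_i$ mutually commuting. Property (1) should be the easiest part: it suffices to arrange $H^0(S,T_S)=0$, which forces the connected algebraic group $\Aut^0(S)$ to be trivial. For a blow-up of an abelian surface this is immediate, since the only global vector fields downstairs are the nowhere-vanishing translation-invariant ones, and none of these lifts through the blow-up.

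The core of the argument is property (2). Here I would analyse the action of $\Aut(S)$ on the set $\{C_i\}$ and on their classes in $\mathrm{NS}(S)$, and construct a homomorphism $\rho\colon\Aut(S)\to\bigoplus_{i\in I}\Z/2$ recording the local action of each automorphism through the involutions $\sigma_i$. Two points must be checked: that $\rho$ is well defined, i.e.\ every automorphism preserves the decomposition into distinguished loci (this is where the negativity and rigidity of the $C_i$ together with $\Aut^0(S)=\{\id_S\}$ are used), and that $\rho$ is surjective, the $\sigma_i$ generating an elementary abelian subgroup of infinite rank. Since $\bigoplus_{i\in I}\Z/2$ is not finitely generated, (2) follows. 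I expect this to be the main obstacle: one must build the geometry so that the family $\{C_i\}$ and the involutions genuinely exist and are mutually independent, while simultaneously controlling the \emph{full} group $\Aut(S)$ so that no unexpected automorphism mixes the $C_i$ in a way that collapses the image of $\rho$ to a finitely generated group.

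Finally, for property (3) I would use the classification of real forms by anti-regular involutions up to conjugacy, equivalently by $H^1(\Gal(\C/\R),\Aut(V))$ for a fixed real structure $\mu_0$. The same infinite family of commuting involutions feeds this computation: choosing $\mu_0$ to centralise the $\sigma_i$, each composite $\mu_0\circ\prod_{i\in J}\sigma_i$ is again a real structure, and the surjection $\rho$ onto $\bigoplus_{i\in I}\Z/2$ separates infinitely many of their conjugacy classes. This exhibits infinitely many pairwise non-isomorphic real forms and completes the proof; the real-forms statement is thus essentially a corollary of the infinite supply of independent involutions produced for (2).
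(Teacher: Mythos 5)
Your outer reduction is, in outline, exactly the paper's: build a surface for $d=2$ and take $V=S\times W$ with $W$ smooth of general type and dimension $d-2$, so that Kodaira dimension is additive and $\Aut(W)$ is finite (Theorem \ref{thm21}). One caveat: the splitting $\Aut(S\times W)=\Aut(S)\times\Aut(W)$ is \emph{not} a consequence of the two factors having different Kodaira dimensions; in general one must rule out non-product automorphisms. The paper does this in Lemma \ref{lem25} by choosing $W=M$ to be an ample divisor in an abelian variety and using $H^0(S_2,\Omega^1_{S_2})=0$, which forces every morphism $S_2\to M$ to be constant, so the projection to $M$ is equivariant; discreteness of $\Aut(S_2)$ then makes the induced map $y\mapsto f_y$ from $M$ to $\Aut(S_2)$ constant. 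This part of your plan has a gap, but a fixable one, and your descent of discreteness and of non-finite generation (via the quotient $\Aut(V)\twoheadrightarrow\Aut(S)$, or via finite index as in the paper) is fine once the splitting is established.

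The core mechanism you propose for property (2), however, is not a gap but an impossibility, and it is incompatible with precisely the hypothesis you use for property (1). You want commuting involutions $\sigma_i$, $i\in I$ infinite, generating an elementary abelian $2$-subgroup of infinite rank in $\Aut(S)$ (your homomorphism $\rho$ is both defined and proved surjective through this family). No smooth projective variety with $\Aut^0(S)=\{\id_S\}$ admits such a subgroup: when $\Aut^0(S)$ is trivial, the kernel of $\Aut(S)\to {\rm GL}(H^2(S,\Z))$ is finite (automorphisms fixing an ample class form a group with finitely many components, by Lieberman--Fujiki), and finite subgroups of ${\rm GL}_N(\Z)$ have uniformly bounded order by Minkowski's theorem (reduction mod $3$ is injective on them); hence the orders of finite subgroups of $\Aut(S)$ are bounded, whereas your group would contain $(\Z/2)^k$ for every $k$. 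So arranging (1) destroys your mechanism for (2). (Separately, a holomorphic involution ``concentrated near $C_i$'' does not exist: an automorphism equal to the identity on a nonempty open set is the identity.) Since your argument for (3) feeds on the same commuting family, it collapses with it. The paper's mechanism is genuinely different and respects this torsion bound: the non-finitely generated group is \emph{torsion-free} --- on a Kummer K3 surface two elliptic fibrations produce automorphisms $f_1,f_2$ acting on a rational curve $C$ by $x\mapsto 2x$ and $x\mapsto x+1$, so the conjugates $f_1^{-n}\circ f_2\circ f_1^{n}$ act by $x\mapsto x+2^{-n}$ and generate a subgroup isomorphic to $\langle 2^{-n}\,|\,n>0\rangle\subset\Q$ (Proposition \ref{prop31}), which then controls $\Aut$ of a suitable blow-up up to finite index; and the infinitely many involutions needed for the real forms are the pairwise non-conjugate conjugates $\iota_n=f_1^{-n}\circ\iota\circ f_1^{n}$ of a \emph{single} fibration inversion (Lemma \ref{lem42}), i.e.\ infinitely many conjugacy classes of involutions rather than infinitely many independent commuting ones.
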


Theorem \ref{thm1} is inspired by a remarkable construction, due to Lesieutre, of such a variety in dimension $6$ over any field of characteristic zero  \cite{Le17}, see also Remark \ref{r:field} below. We refer to this reference for history on the finite generation problem of automorphism groups of smooth projective varieties as well as its relation with their real forms. 

For dimension $d=1$, it is well-known that no smooth projective curve satisfies the above properties. 
The existence of surfaces of Kodaira dimension 0 with infinitely many non-isomorphic real forms is unexpected and answers a longstanding open problem. In fact, abelian varieties, surfaces of Kodaira dimension $\geq 1$ and {\it minimal} surfaces of Kodaira dimension 0 only have finitely many non-isomorphic real forms, see Borel-Serre \cite[Cor.6.3]{BS64}, Degtyarev-Itenberg-Kharlamov \cite[Appendix D]{DIK00} and also Benzerga \cite{Ben}. For the reader's convenience, we recall now the notion of real form.

Let $V$ be a variety defined over $\C$. 
Let $V_\R'$ be a variety defined over $\R$ and $V'$ the complex variety  associated to it, i.e.,  
$$V' := V'_{\R} \times_{{\rm Spec}\, \R} {\rm Spec}\, \C\,\, .$$ 
We say that $V_{\R}'$ is {\it a real form} of $V$ if $V'$ is isomorphic to  $V$ over $\C$. Two real forms $V'_\R$ and $V''_\R$ of $V$ are said to be {\it isomorphic} if they are isomorphic over $\R$.

The main new idea to construct varieties of low dimension satisfying Theorem \ref{thm1} is an effective use of a projective K3 surface together with suitable blow-ups. As our projective K3 surface is minimal and defined over $\C$, the canonical representation of its automorphism group is finite, see \cite[Th.14.10]{Ue75}, and blow-ups do not produce new automorphisms. This is an advantage of using a K3 surface, which is not available for the rational surface used in Lesieutre's construction.

In Section \ref{sect2}, we will give a general strategy to construct a variety $V$  from a K3 surface $S$ and a smooth projective variety $M$ of dimension $d-2$ with special properties, see Definition \ref{def23} and Theorem \ref{thm21}. In Section \ref{sect3}, we will present an explicit example of $S$ with the properties required in Theorem \ref{thm21} and get a variety satisfying the properties (1) and (2) in Theorem \ref{thm1}, see Theorem \ref{thm31}.  We will use in these two sections the theory of elliptic surfaces due to Kodaira \cite{Ko63} and classical results from group theory stated in Proposition \ref{prop1}.
In Section \ref{sect4}, we  will specify some parameters in our construction and get the property (3) of Theorem \ref{thm1}.
A criterion for a variety to have infinitely many non-isomorphic real forms, due to Serre and  Lesieutre, is crucial in our approach, see \cite[Chapter 5]{Se02} and  \cite[Lemma 13]{Le17}.

We end this introduction with the following open problems.

\begin{problem} \rm
Is there a smooth projective surface $S$ defined over an algebraically closed field $k$ of positive characteristic such that the automorphism group ${\rm Aut}\, (S)$ is not finitely generated ?
\end{problem}

Our approach uses finiteness of pluricanonical representation and Torelli theorem for K3 surfaces, and can not be directly applied to this question. See also \cite{Le17} for a $6$-dimensional example in positive characteristic. 

\begin{problem} \rm
Are there smooth minimal projective varieties over $\C$  having infinitely many isomorphic classes of real forms  ?
\end{problem}

Such varieties, if exist, should be of dimension  $\ge 3$, by \cite[Appendix D]{DIK00}. It is also unknown if there is a complex rational surface with non-finitely generated ${\rm Aut}\, (S)$ and infinitely many real forms which are mutually non-isomorphic over $\R$.

\medskip\noindent
{\bf Acknowledgements.} We would like to thank Professors  Igor Dolgachev, H\'el\`ene Esnault and John Lesieutre for valuable discussions, questions and suggestions.
The first author would like to thank the University of Tokyo for the hospitality and support during the preparation of this paper.
The second author is grateful to Professors Christopher Hacon, Daniel Huybrechts, Bernd Siebert and Chengyang Xu for their invitation to Oberwolfach, September 2017, at which he learned the work of Professor John Lesieutre. Finally, we would like to thank the referees for their remarks which allow us to improve the presentation of this paper.

\section{General strategy of construction from a special K3 surface}\label{sect2}

In the case of dimension 2, our general strategy is first to blow up a point $P$ of a K3 surface $S$ and get a new surface $S_1$. We then obtain the desired surface $S_2$ by blowing up a finite number of points  in the exceptional curve in $S_1$. Higher dimensional varieties will be obtained as products of $S_2$ and suitable varieties with finite automorphism groups.

By blowing up $S$ and $S_1$, we are able to reduce the automorphism groups of these surfaces. More precisely, with a suitable choices of parameters, $\Aut(S_1)$ is isomorphic to the group of automorphisms of $S$ which fix the point $P$ and $\Aut(S_2)$ is isomorphic to a finite extension of the group of automorphisms of $S$ which fix the point $P$ and also the tangent vectors at $P$. 

Although $\Aut(S)$ is finitely generated, the later group will be infinitely generated for suitable choices of parameters. This allows us to construct varieties satisfying the properties (1) and (2) in Theorem \ref{thm1}. The property (3) in this theorem will be obtained with an extra specification of parameters.

For any projective variety $X$ and any closed algebraic subset $Y$ of $X$, define
$${\rm Aut}\, (X, Y) := \big\{f \in {\rm Aut}\, (X)\, |\, f(Y) = Y \big\}\,\, .$$
This is a subgroup of ${\rm Aut}\, (X)$ and $f|_Y \in {\rm Aut}\, (Y)$ if $f \in {\rm Aut}\, (X, Y)$. 
For simplicity, we denote the group ${\rm Aut}\, (X, \{P\})$ by ${\rm Aut}\, (X, P)$ if $P$ is a closed point of $X$.

From now on, let $S$ be a projective K3 surface, i.e., a smooth projective surface $S$ such that $h^1(S, \sO_S) = 0$ 
and  $\sO_S(K_S) \simeq \sO_S$. We also set $H^0(S, \Omega_S^2) = \C \omega_S$, where $\omega_S$ is a nowhere degenerate holomorphic 2-form. 

Note that ${\rm Aut}\, (S) = {\rm Bir}\, (S)$, i.e., any birational self-map of $S$ is biregular, as $K_S$ is trivial so that no curve is  contracted nor extracted, cf. \cite{Ka08}. In particular, for any birational morphism $p : S' \to S$ from a smooth projective surface $S'$ to $S$, we obtain the following injective group homomorphism 
$${\rm Aut}\, (S') \to {\rm Aut}\, (S)\,\, ;\,\, f \mapsto p \circ f \circ p^{-1}\,\, .$$ 
In what follows, we regard ${\rm Aut}\, (S')$ as a subgroup of ${\rm Aut}\, (S)$ via this group homomorphism. 

Note also that since 
$$H^0(S, T_S) \simeq H^0(S, \Omega_S^1) \simeq \overline{H^1(S, \sO_S)} = \{0\}\, ,$$
$S$ admits no holomorphic vector field and therefore, ${\rm Aut}\, (S)$ is discrete.
So any subgroup of ${\rm Aut}\, (S)$ is discrete as well. 

\smallskip

We now introduce some crucial notions for our construction.

\begin{definition}\label{def21}
A triple $(S, C, P)$ is said to be  {\it special} if 
\begin{enumerate}
\item $S$ is a projective K3 surface, $C \subset S$ is a curve such that $C\simeq \P^1$,  $P \in C$ is a closed point; and
\item ${\rm Aut}\,(S, P) \subset {\rm Aut}\, (S, C)$.
\end{enumerate}
\end{definition}

If $(S, C, P)$ is a special triple as above, then we have the following two natural representations defined by the differentials at $P$
$$r_{S, P} : {\rm Aut}\,(S, P) \to {\rm GL}\, (T_{S, P}) \simeq {\rm GL}(2, \C)\,\, ;\,\, f \mapsto df_P$$
and 
$$r_{C, P} : {\rm Aut}\,(S, P) \to {\rm GL}\, (T_{C, P}) = \C^{\times}\,\, ;\,\, f \mapsto d(f|_C)_P\,\, ,$$
where $T_{S,P}$ is the tangent space of $S$ at $P$ and $T_{C,P}$ is the tangent line of $C$ at $P$. 
For the second representation, we use that $T_{C,P}$ is invariant under the action of $ {\rm Aut}\,(S, P)$ as $(S,C,P)$ is special.
The following  two groups are also important in our study.

\begin{definition}\label{def20} Define
$$G(S, P) := {\rm Ker}\, \big(r_{S, P} : {\rm Aut}\,(S, P) \to {\rm GL}\, (T_{S, P}) \big)\,\, $$ 
and
$$G(S, C, P) := {\rm Ker}\, \big(r_{C, P} : {\rm Aut}\,(S, P) \to {\rm GL}\, (T_{C, P}) \big)\,\, .$$ 
We say that $G(S, C, P)$ is the {\it group associated with the special triple} $(S, C, P)$. 
\end{definition}

\begin{definition}\label{def22}
A special triple $(S, C, P)$ is said to be {\it very special} if 
\begin{enumerate}
\item 
there is a basis $\{ v_1, v_2 \}$ of $T_{S, P}$ with $v_1 \in T_{C, P}$ on which
all the elements of $r_{S, P}({\rm Aut}\, (S, P))$ are diagonal, i.e., 
for every $f \in {\rm Aut}\,(S, P)$,  there are $\alpha_i(f) \in \C$ such that
$$r_{S, P}(f)(v_i) = \alpha_i(f)v_i\,\quad \text{for } i=1, 2\,\, ;$$
and 
\item the group $G(S, C, P)$ associated with $(S, C, P)$ is not finitely generated. 
\end{enumerate}
\end{definition}

In what follows, we will fix a local holomorphic coordinate system $(z_1,z_2)$ centred at the point $P$ such that $v_1=\partial/\partial z_1$ and $v_2=\partial/\partial z_2$.

\begin{remark}\label{rem21}It is known that the automorphism group ${\rm Aut}\, (S)$ of any K3 surface $S$ is finitely generated, see \cite{St85}. However, it is not true in general that any subgroup of ${\rm Aut}\, (S)$ is again finitely generated. In fact, we will construct in  Section \ref{sect3}
a very special triple $(S, C, P)$.
\end{remark}
 
The following classical results from group theory will be used in the proofs of the non-finite generation for groups of automorphisms.
 
\begin{proposition}\label{prop1}
Let $G$ be a group, $H \subset G$ a subgroup of $G$, and $\varphi : G \to Q$ a group homomorphism onto a group $Q$. 
Then the following properties hold.
\begin{enumerate}
\item When the index of $H$ in $G$ is finite,  i.e., $[G : H] < \infty$, the group $H$
 is finitely generated if and only if $G$ is finitely generated.  
\item If $G$ is abelian and finitely generated, then $H$ is also finitely generated regardless of the index $[G:H]$.
\item If $G$ is finitely generated, then $Q$ is always finitely generated. 
\end{enumerate}
\end{proposition}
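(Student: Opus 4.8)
The plan is to treat the three assertions separately, since each is a standard fact of combinatorial group theory and only the forward implication of (1) requires genuine work. Statement (3) is immediate: if $G = \langle g_1, \dots, g_n \rangle$, then because $\varphi$ is surjective every $q \in Q$ has the form $q = \varphi(g)$ with $g$ a word in the $g_i^{\pm 1}$, and applying $\varphi$ rewrites $q$ as the same word in the $\varphi(g_i)^{\pm 1}$; hence $Q = \langle \varphi(g_1), \dots, \varphi(g_n) \rangle$. For (2) I would invoke that $\Z$ is Noetherian: a finitely generated abelian group is a finitely generated $\Z$-module, hence a Noetherian module, so its submodule $H$ is again finitely generated. (Equivalently, one argues by induction on the minimal number of generators of $G$ via the structure theorem for finitely generated abelian groups.) The index $[G:H]$ plays no role here, in contrast with (1).

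For (1), the implication ``$H$ finitely generated $\Rightarrow$ $G$ finitely generated'' is easy. If $H = \langle h_1, \dots, h_k \rangle$ and $t_1, \dots, t_m$ is a complete set of right coset representatives for $H$ in $G$ with $m = [G:H] < \infty$, then each $g \in G$ lies in some coset $H t_j$, so $g = h t_j$ with $h \in H$ a word in the $h_i$; therefore $\{h_1, \dots, h_k, t_1, \dots, t_m\}$ generates $G$.

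The substantive direction is ``$G$ finitely generated $\Rightarrow$ $H$ finitely generated'', which is Schreier's lemma. Assume $G = \langle g_1, \dots, g_n \rangle$, fix a right transversal $T = \{t_1, \dots, t_m\}$ for $H$ with $t_1$ the identity, and for $g \in G$ write $\overline{g} \in T$ for the representative of the coset $Hg$. I would prove that the finite set
$$\Sigma := \big\{\, t\, g_i\, (\overline{t g_i})^{-1} \ :\ t \in T,\ 1 \le i \le n \,\big\}$$
is contained in $H$ and generates it. Membership is clear, since $t g_i$ and $\overline{t g_i}$ represent the same right coset. For generation, I would take $h \in H$, write $h = s_1 \cdots s_\ell$ with each $s_p \in \{g_1^{\pm 1}, \dots, g_n^{\pm 1}\}$, set $u_0 := t_1$ and $u_p := \overline{s_1 \cdots s_p}$, and use the telescoping identity
$$h = (u_0 s_1 u_1^{-1})(u_1 s_2 u_2^{-1}) \cdots (u_{\ell - 1} s_\ell u_\ell^{-1}),$$
noting that $u_\ell = \overline{h} = t_1$ because $h \in H$.

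The only genuine obstacle is verifying that each factor $u_{p-1} s_p u_p^{-1}$ lies in $\Sigma \cup \Sigma^{-1}$. This rests on the identity $\overline{u_{p-1} s_p} = u_p$, which holds because left multiplication by an element of $H$ does not change a right coset, together with a short case distinction: when $s_p = g_i$ the factor equals $u_{p-1} g_i (\overline{u_{p-1} g_i})^{-1} \in \Sigma$, and when $s_p = g_i^{-1}$ its inverse $u_p g_i (\overline{u_p g_i})^{-1}$ lies in $\Sigma$, so the factor itself lies in $\Sigma^{-1}$. Once this bookkeeping is in place, $h$ is exhibited as a product of elements of $\Sigma \cup \Sigma^{-1}$, and since $\Sigma$ is finite it follows that $H$ is finitely generated. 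Everything outside this rewriting step is purely formal.
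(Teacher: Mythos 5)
Your proof is correct, but it takes a much more self-contained route than the paper, which disposes of this proposition almost entirely by citation: the easy direction of (1) and all of (3) are declared obvious, the converse direction of (1) is referred to \cite[Page 181, Cor.~1]{Su82}, and (2) is attributed to the fundamental theorem of finitely generated abelian groups, again with a reference to \cite{Su82}. You instead prove the substantive direction of (1) from scratch via Schreier's lemma --- the transversal, the Schreier generators $t g_i (\overline{t g_i})^{-1}$, and the telescoping rewriting are all handled correctly, including the key identity $\overline{u_{p-1}s_p}=u_p$ (left multiplication by elements of $H$ preserves right cosets) and the case bookkeeping showing that a factor coming from an inverse letter $g_i^{-1}$ lies in $\Sigma^{-1}$ --- and you handle (2) by the Noetherian-module argument ($H$ is a submodule of a finitely generated module over the Noetherian ring $\Z$) rather than by the structure theorem, while noting the two are interchangeable. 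What the paper's approach buys is brevity, which is appropriate for classical background material; what yours buys is a complete, reference-free verification that isolates the only genuinely nontrivial content of the proposition (the Schreier rewriting) and exhibits an explicit finite generating set for $H$.
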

\proof
For the first assertion, it is obvious that if $H$ is finitely generated then so is $G$; for the converse, 
see e.g. \cite[Page 181, Cor. 1]{Su82}. 
The second assertion is a consequence of the fundamental theorem of finitely generated abelian groups, see e.g. \cite[Chap. 2, Sect. 5]{Su82}. The last assertion is obvious.
\endproof

\begin{lemma}\label{lem21}
Let $(S, C, P)$ be a very special triple. Then $G(S, P)$ is a finite index subgroup of $G(S, C, P)$. In particular, $G(S, P)$ is not finitely generated. 
\end{lemma}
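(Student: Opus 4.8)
The plan is to show that, on the subgroup $G(S,C,P)$, the differential representation $r_{S,P}$ realizes $G(S,P)$ as the kernel of a character whose image is finite, and then to invoke Proposition \ref{prop1}(1).

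First I would make the relevant characters explicit. By condition (1) in the definition of a very special triple, every element of $r_{S,P}({\rm Aut}\,(S,P))$ is diagonal in the fixed basis $\{v_1,v_2\}$, so the assignments $f \mapsto \alpha_i(f)$ define group homomorphisms $\alpha_1,\alpha_2 : {\rm Aut}\,(S,P) \to \C^\times$. Since $v_1 \in T_{C,P}$ and, by speciality, every $f \in {\rm Aut}\,(S,P)$ preserves $C$, the action of $df_P$ on $T_{C,P}=\C v_1$ is multiplication by $\alpha_1(f)$; hence $r_{C,P}(f)=\alpha_1(f)$ and
$$G(S,C,P) = \ker \alpha_1\,, \qquad G(S,P) = \ker\alpha_1 \cap \ker \alpha_2\,.$$
In particular $G(S,P) \subset G(S,C,P)$, and $f \mapsto \alpha_2(f)$ induces an injective homomorphism
$$G(S,C,P)/G(S,P) \hookrightarrow \C^\times\,.$$
It therefore suffices to prove that the image $\alpha_2(G(S,C,P)) \subset \C^\times$ is finite.

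The key step is to relate $\alpha_2$ on $G(S,C,P)$ to the canonical representation of ${\rm Aut}\,(S)$. The action on $H^0(S,\Omega_S^2)=\C\omega_S$ yields a character $\chi : {\rm Aut}\,(S) \to \C^\times$ determined by $f^*\omega_S = \chi(f)\omega_S$, and its image is finite because the canonical representation of the automorphism group of a K3 surface is finite, see \cite[Th.14.10]{Ue75}. Writing $\omega_S = g\, dz_1 \wedge dz_2$ in the coordinates $(z_1,z_2)$ centred at $P$ with $g(P)\neq 0$, and evaluating $f^*\omega_S$ at the fixed point $P$, where $df_P = {\rm diag}(\alpha_1(f),\alpha_2(f))$, I would obtain
$$\chi(f) = \alpha_1(f)\,\alpha_2(f) \qquad \text{for every } f \in {\rm Aut}\,(S,P)\,.$$
Restricting to $f \in G(S,C,P)$, where $\alpha_1(f)=1$, this gives $\alpha_2(f)=\chi(f)$, so $\alpha_2(G(S,C,P)) \subset \chi({\rm Aut}\,(S))$ is finite. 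Consequently $[G(S,C,P):G(S,P)] < \infty$.

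Finally, for the last assertion I would apply Proposition \ref{prop1}(1) to the finite-index inclusion $G(S,P) \subset G(S,C,P)$: since $G(S,C,P)$ is not finitely generated by condition (2) of very specialness, $G(S,P)$ cannot be finitely generated either. The main obstacle — really the only nontrivial input — is the identification $\chi(f)=\alpha_1(f)\alpha_2(f)=\det df_P$, which lets me import the finiteness of the canonical representation; everything else is bookkeeping with the two characters $\alpha_1$ and $\alpha_2$.
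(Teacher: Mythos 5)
Your proof is correct and follows essentially the same route as the paper: both identify $G(S,P)$ inside $G(S,C,P)$ as the kernel of the character $\alpha_2$, show via evaluation of $f^*\omega_S$ at the fixed point $P$ that $\alpha_2$ coincides there with the canonical representation $f \mapsto \alpha(f)$ (finite image by \cite[Th.14.10]{Ue75}), and conclude finite index plus non-finite generation via Proposition \ref{prop1}(1). The only difference is cosmetic: you phrase the bookkeeping through the two explicit characters $\alpha_1,\alpha_2$ rather than the paper's direct computation, which changes nothing of substance.
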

\begin{proof} 
We only need to prove the first assertion because, by Proposition \ref{prop1}(1), 
the second assertion is then a direct consequence.

It is clear that $G(S, P)$ is a subgroup of $G(S, C, P)$. We check that 
$[G(S, C, P) : G(S, P)]$ is finite. By Definitions \ref{def20} and \ref{def22}(1),
the action of $G(S, C, P)$ on $T_{S, P}$ has the following form 
$$df_P(v_1) = v_1\,\, , \,\, df_P(v_2) = \alpha_2(f)v_2\,\, .$$
In particular, $f \in G(S, P)$  if and only if $f \in G(S, C, P)$ and $\alpha_2(f) = 1$. 

Observe that since $H^0(S,\Omega_S^2)=\C\omega_S$, for every $f\in \Aut(S)$, there is a complex number $\alpha(f)$ such that $f^*\omega_S=\alpha(f)\omega_S$.
Recall that the canonical representation 
$$\rho: {\rm Aut}\, (S) \to {\rm GL}\, (H^0(S, \Omega_S^2)) = \C^{\times}\,\, ,\,\, f \mapsto \alpha(f)\quad  \text{with} \quad  f^*\omega_S = \alpha(f)\omega_S$$
is finite, i.e., $|{\rm Im}\, \rho| < \infty$, by \cite[Th.14.10]{Ue75}. Hence the same property holds for the restriction
$\rho|_{G(S, C, P)}$ of $\rho$ to the subgroup $G(S, C, P)$ of $ {\rm Aut}\, (S)$. 

In the local coordinates fixed after Definition \ref{def22}, we write $\omega_S(P)= \lambda dz_1\wedge dz_2$ with $\lambda\in\C^\times$. We have for $f\in G(S,C,P)$
$$f^*\omega_S(P)= \lambda \alpha_2(f) dz_1\wedge dz_2 = \alpha_2(f)\omega_S(P) 
\quad \text{and hence} \quad \alpha_2(f)=\alpha(f).$$ 
It follows that $G(S, P) = {\rm Ker}\, (\rho|_{G(S, C, P)})$. Thus, $[G(S, C, P) : G(S, P)]$ is finite since the image of $\rho$ is finite. This completes the proof of the lemma.
\end{proof}

\begin{definition}\label{def23}
Let $(S, C, P)$ be a very special triple. Let $p_1 : S_1 \to S$ be the blow-up of $S$ at $P$ and $E_P \simeq \P^1$ the exceptional curve. The above local coordinates $(z_1,z_2)$ induce homogeneous coordinates $[z_1:z_2]$ on $E_P$. 
Define two points $P$ and $P'$ in $E_P$ by $P' := [v_1]=[1:0]$ and $P'':=[v_2]=[0:1]$. Choose  $k$ distinct points $Q_1, \ldots, Q_k$ in $E_P\setminus \{P',P''\}$ for some integer $k\geq 1$.
Let $p_2 : S_2 \to S_1$ be the blow-up of $S_1$ at  $Q_1,\ldots ,Q_k$. We will call $S_2$ a {\it core surface} of $(S, C, P)$. \end{definition}

Our main result in this section is Theorem \ref{thm21} below. The existence of a very special triple will be obtained later  in Section \ref{sect3}. The two results together  give us a variety $V$  satisfying the properties (1) and (2) in  Theorem \ref{thm1} with $V:=S_2$ for $d=2$ and $V:=S_2\times M$ for $d=m+2\geq 3$. In order to get the property (3) in that theorem, we need to specify some parameters in the construction. This will be done in Section \ref{sect4}.

\begin{theorem}\label{thm21}
Let $(S, C, P)$ be a very special triple and $S_2$ a core surface of $(S, C, P)$ as above. Then
\begin{enumerate}
\item ${\rm Aut}\, (S_2)$ is discrete and not finitely generated;
\item for each integer $m \ge 1$, there is a smooth projective variety $M$ of dimension $m$ and general type such that ${\rm Aut}\, (S_2 \times M)$ is discrete and not finitely generated. 
\end{enumerate}
\end{theorem}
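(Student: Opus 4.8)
Write $p=p_1\circ p_2\colon S_2\to S$ for the composite blow-up and $Z:=p^{-1}(P)$ for its exceptional locus. First, $\Aut(S_2)$ is discrete: as in the discussion before Definition \ref{def21} it embeds into $\Aut(S)$ (equivalently, $S_2$ is a blow-up of a surface carrying no vector field, so $H^0(S_2,T_{S_2})=0$). To identify the image I would use that $\kappa(S_2)=\kappa(S)=0$ and $h^0(S_2,nK_{S_2})=h^0(S,nK_S)=1$, so $|K_{S_2}|$ consists of a single effective divisor, necessarily supported on $Z$. Hence every $f\in\Aut(S_2)$ preserves $Z$, restricts to an isomorphism of $S_2\setminus Z\simeq S\setminus\{P\}$, and so descends to $\tilde f:=pfp^{-1}\in\Aut(S)$ with $\tilde f(P)=P$ (using $\Aut(S)={\rm Bir}(S)$); conversely $g\in\Aut(S,P)$ lifts to $S_2$ exactly when the induced projective map $\P(dg_P)$ of $E_P=\P(T_{S,P})$ preserves $\{Q_1,\dots,Q_k\}$. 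This gives
$$\Aut(S_2)\;\cong\;A:=\{g\in\Aut(S,P)\ :\ \P(dg_P)\text{ preserves }\{Q_1,\dots,Q_k\}\}.$$

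The core of (1) is the finite-index bound $[A:G(S,P)]<\infty$, after which Lemma \ref{lem21} and Proposition \ref{prop1}(1) finish. By Definition \ref{def22}(1) every $g\in\Aut(S,P)$ satisfies $dg_P(v_i)=\alpha_i(g)v_i$, the $\alpha_i$ being characters of $\Aut(S,P)$; so on $E_P$ the element $g$ acts by $[z_1:z_2]\mapsto[z_1:\beta(g)z_2]$ with $\beta(g)=\alpha_2(g)/\alpha_1(g)$, fixing $P'$ and $P''$ and sending $Q_i=[1:q_i]$ to $[1:\beta(g)q_i]$, where $q_i\in\C^\times$. For $g\in A$ multiplication by $\beta(g)$ permutes $\{q_1,\dots,q_k\}$, and comparing the product $q_1\cdots q_k$ forces $\beta(g)^k=1$; thus $\beta(A)$ is a finite subgroup of $\C^\times$. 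Its kernel $B=\{g\in\Aut(S,P):\alpha_1(g)=\alpha_2(g)\}$—the elements with scalar differential, which lie in $A$ automatically—therefore has finite index in $A$. On $B$ the canonical character of Lemma \ref{lem21} satisfies $\rho(g)=\alpha_1(g)\alpha_2(g)=\alpha_1(g)^2$, and since $\im\rho$ is finite by \cite[Th.14.10]{Ue75}, $\alpha_1(g)$ lies in a finite set; hence $r_{S,P}(B)$ is finite and $[B:G(S,P)]<\infty$, as $G(S,P)=\ker(r_{S,P})\cap B$. Combining, $[A:G(S,P)]<\infty$.

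For (2) I would fix, for each $m\ge1$, a smooth projective variety $M$ of dimension $m$ with $K_M$ ample (hence of general type) and $\Aut(M)=\{\id_M\}$—for instance a product $C_1\times\cdots\times C_m$ of pairwise non-isomorphic curves of genus $\ge 3$ with trivial automorphism groups. Discreteness of $\Aut(S_2\times M)$ is immediate from the K\"unneth formula $H^0(T_{S_2\times M})=H^0(T_{S_2})\oplus H^0(T_M)=0$. For non-finite generation the plan is to show the product is rigid. Since $\kappa(S_2)=0$ with $h^0(S_2,nK_{S_2})=1$, K\"unneth gives $H^0(S_2\times M,nK_{S_2\times M})\cong H^0(M,nK_M)$, so the Iitaka fibration of $S_2\times M$ is the second projection followed by the pluricanonical embedding of $M$ (for $n\gg0$), with general fibre $S_2\times\{y\}$. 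Any $F\in\Aut(S_2\times M)$ preserves this canonical fibration, so it cannot interchange the factors and it induces $\bar F\in\Aut(M)=\{\id_M\}$ on the base; thus $F$ preserves every fibre and $F(x,y)=(f_y(x),y)$ with $f_y\in\Aut(S_2)$. The resulting morphism $y\mapsto f_y$ from the connected $M$ to the discrete group $\Aut(S_2)$ is constant, so $F=f\times\id_M$. Hence $\Aut(S_2\times M)\cong\Aut(S_2)$, which is not finitely generated by (1) (in general one obtains $\Aut(S_2)\times\Aut(M)$ and applies Proposition \ref{prop1}(3) to the first projection).

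The main obstacle is the rigidity in (2): intrinsically recovering the projection to $M$ via the Iitaka fibration and then using discreteness of $\Aut(S_2)$ to force $F$ to split as $f\times\id_M$, thereby transporting non-finite generation from $S_2$ to the product. The identification $\Aut(S_2)\cong A$ and the finite-index estimate in (1), while delicate, follow the template of Lemma \ref{lem21} once the simultaneous diagonalizability from very-specialness is invoked.
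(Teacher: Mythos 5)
Your proposal is correct; part (1) follows essentially the paper's own template, while part (2) takes a genuinely different route. In (1) both you and the paper pin down $\Aut(S_2)$ inside $\Aut(S,P)$ via the uniqueness of the effective (pluri)canonical divisor of $S_2$, and both reduce to the finite-index bound $[\Aut(S_2):G(S,P)]<\infty$ before invoking Lemma \ref{lem21} and Proposition \ref{prop1}(1); the only real difference is combinatorial: the paper passes to the subgroup $G_0$ fixing each $Q_i$ (index at most $k!$) and uses that an automorphism of $E_P\simeq\P^1$ with $k+2\ge 3$ fixed points is the identity, whereas you compare the product $q_1\cdots q_k$ to force $\beta(g)^k=1$ directly; both arguments then use the finiteness of the canonical representation \cite[Th.14.10]{Ue75} in exactly the same way to descend from the scalar-differential subgroup to $G(S,P)$. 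In (2) the paper chooses $M$ to be a smooth very ample divisor in an abelian variety, so that $H^0(S_2,\Omega^1_{S_2})=0$ forces every morphism $S_2\to M$ to be constant, and the splitting $\Aut(S_2\times M)=\Aut(S_2)\times\Aut(M)$ is the content of Lemma \ref{lem25}; you instead require $K_M$ ample and derive preservation of the second projection from equivariance of the pluricanonical map of $S_2\times M$ together with $h^0(S_2,nK_{S_2})=1$, after which the same connectedness-plus-discreteness argument splits off the $S_2$ factor. Your route is more intrinsic: it identifies the projection to $M$ with the Iitaka fibration, so it works for any general-type $M$ with $K_M$ ample rather than only for divisors in abelian varieties; the paper's choice buys a more elementary fibration-preservation step (no pluricanonical theory, just "no non-constant morphisms $S_2\to M$"). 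Two small remarks: your extra requirement $\Aut(M)=\{\id_M\}$ is unnecessary, since your parenthetical fallback via Proposition \ref{prop1}(3) (or (1), as $\Aut(M)$ is automatically finite for $M$ of general type) already suffices; and this is just as well, because your claimed example rests on the non-trivial rigidity fact that every automorphism of a product of pairwise non-isomorphic curves of genus $\ge 3$ respects the factors (a de Franchis-type argument), which you would otherwise need to justify.
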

\begin{proof}
(1)  We have seen that ${\rm Aut}\, (S_1)$ and ${\rm Aut}\, (S_2)$ can be regarded as  subgroups of ${\rm Aut}\, (S)$ 
via $p_1$ and  $p_1 \circ p_2$. So these groups are discrete since 
${\rm Aut}\, (S)$ is discrete. It remains to check that ${\rm Aut}\, (S_2)$ is not finitely generated. We use the notation from Definitions \ref{def21}, \ref{def22}, \ref{def23} and note that 
$$G(S, P) \subset {\rm Aut}\,(S_2) \subset {\rm Aut}\, (S)$$
as $f|_{T_{S, P}} = \id_{T_{S, P}}$ for $f \in G(S, P)$. 
By Lemma \ref{lem21}, the group $G(S,P)$ is not finitely generated.  
So by Proposition \ref{prop1}(1), it is enough to prove that  $[{\rm Aut}\, (S_2) : G(S, P)]$ is finite.

As $K_S$ is trivial, by the canonical bundle formula, we have 
$$K_{S_2} = E_P' + 2E_{Q_1} + \cdots + 2E_{Q_k}\, ,$$
where $E_P'$ is the proper transform of $E_P$ and $E_{Q_i}$ is the exceptional divisor of the second blow-up $p_2$, lying over $Q_i$ for  $i=1 ,\ldots, k$. Note  that 
$$|K_{S_2}| = \{E_P' + 2E_{Q_1} + \cdots + 2E_{Q_k} \}\,\, .$$ 
As ${\rm Aut}\, (S_2)$ acts on $|K_{S_2}|$, we deduce that for every $f\in  {\rm Aut}\, (S_2)$
$$f(E_P') = E_{P}'\quad \text{and} \quad f(\{E_{Q_1}, \ldots, E_{Q_{k}}\}) =  \{E_{Q_1}, \ldots, E_{Q_{k}}\} \,\, .$$
Therefore, via $p_2$ and $p_1$, we can write 
$${\rm Aut}\, (S_2) = {\rm Aut}\, (S_1, E_P) \cap {\rm Aut}\,(S_1, \{Q_1, \ldots, Q_k\}) \subset {\rm Aut}(S, P)\,\, .$$

Let $f \in {\rm Aut}\,(S_2) \subset {\rm Aut}\, (S, P)$. 
By Definition \ref{def22}(1), $f$ fixes the points $P'$ and $P''$.  So $f|_{E_P}$ fixes two points $P',P''$ and the set $\{Q_1, \ldots, Q_k\}$. 
Consider the subgroup 
$$G_0 := \big\{f \in {\rm Aut}\,(S_2) \subset {\rm Aut}\, (S, P)\,|\, f(Q_i) = Q_i \text{ for } i=1, \ldots, k\, \big\}\,\,$$
of ${\rm Aut}\,(S_2)$. It is clear that $[{\rm Aut}\,(S_2) : G_0] \le k!$. 

Let $f \in G_0$. Its restriction to $E_P$ fixes $k+2$ points. Therefore,
$f|_{E_P} = \id_{E_P}$, or equivalently, $df_P = \alpha(f) \id_{T_{S, P}}$ for some number $\alpha(f)\in \C^\times$. Recall that $G(S, P) \subset G_0$. So it follows from the last identity that  $f \in G(S, P)$ if and only if $\alpha(f) = 1$.
We also have
$$f^*\omega_S(P) = \alpha(f)^2\omega_S(P)\,\, .$$

Now, the group 
$$\big\{\alpha(f)^2\in \C^\times\, |\, f \in G_0 \big\}$$ 
is finite by the finiteness of the canonical representation, see again \cite[Th.14.10]{Ue75}. Thus, the group
$$I :=\big\{\alpha(f)\in \C^\times\, |\, f \in G_0 \big\}$$
is finite as well.  We conclude that  
$$[{\rm Aut}\, (S_2) : G(S, P)] = [{\rm Aut}\, (S_2) : G_0] \cdot [G_0 : G(S, P)] \le k! \cdot |I|\,\, .$$
Hence $[{\rm Aut}\, (S_2) : G(S, P)]$ is finite.
This ends the proof of the assertion (1).

\medskip

(2)  Let $A$ be an abelian variety of dimension $m+1$ and $M$ a smooth very ample divisor of $A$. Then $M$ is an $m$-dimensional smooth variety with ample canonical bundle by the adjunction formula. So it is of general type. It is obvious that such  $A$ and $M$ exist. 

Observe that 
$$H^0(S_2, \Omega_{S_2}^1) \simeq H^0(S, \Omega_S^1) = 0\,\, .$$ 
So any morphism from $S_2$ to an abelian variety is constant. In particular, any morphism from $S_2$ to $M$ is constant.
We will show later in  Lemma \ref{lem25} below that 
$${\rm Aut}(S_2 \times M) = {\rm Aut}\, (S_2) \times {\rm Aut}\, (M)\,\, .$$
As $M$ is of general type, ${\rm Aut}\, (M)$ is a finite group by the finiteness of pluri-canonical representation \cite[Th.14.10]{Ue75}. In particular, ${\rm Aut}\, (S_2)$ is isomorphic to a finite index subgroup of ${\rm Aut}\, (S_2\times M)$. Recall that  ${\rm Aut}\, (S_2)$ is not finitely generated by the assertion (1). 
We deduce from Proposition \ref{prop1}(1)
that ${\rm Aut}\, (S_2 \times M)$ is not finitely generated. This completes the proof of the theorem.
\end{proof}

We used above the following lemma. 

\begin{lemma}\label{lem25}
Let $S$ be a projective K3 surface and  $\nu : S' \to S$ a birational morphism from a smooth surface $S'$ to $S$. Let $M$ be a smooth projective variety. Assume that there is no non-constant morphism from $S'$ to $M$. 
Then 
$${\rm Aut}\, (S' \times M) = {\rm Aut}\, (S') \times {\rm Aut}\, (M)\,\, .$$
\end{lemma}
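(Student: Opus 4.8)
The reverse inclusion ${\rm Aut}(S') \times {\rm Aut}(M) \subseteq {\rm Aut}(S' \times M)$ is clear, since a pair $(g,h)$ acts on the product by $(s,m) \mapsto (g(s), h(m))$; so the content is the opposite inclusion. The plan is to fix $f \in {\rm Aut}(S' \times M)$, denote by $\pi_{S'}$ and $\pi_M$ the two projections, and show that $f$ respects both of them. I would extract the factor in ${\rm Aut}(M)$ first, and then the factor in ${\rm Aut}(S')$.

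For the first step, observe that the fibres of $\pi_M$ are the copies $S' \times \{m\}$ of $S'$. For each $m$, the restriction of $\pi_M \circ f$ to $S' \times \{m\} \cong S'$ is a morphism from $S'$ to $M$, hence constant by hypothesis. Thus $\pi_M \circ f$ is constant along the fibres of $\pi_M$; fixing a point $s_0 \in S'$ and writing $\iota_{s_0}(m) = (s_0, m)$, the morphism $h := \pi_M \circ f \circ \iota_{s_0} : M \to M$ satisfies $\pi_M \circ f = h \circ \pi_M$. Running the same argument for $f^{-1}$ yields a morphism $h'$ with $\pi_M \circ f^{-1} = h' \circ \pi_M$, and since $\pi_M$ is surjective one gets $h \circ h' = h' \circ h = {\rm id}_M$, so $h \in {\rm Aut}(M)$. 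It follows that $f$ maps $S' \times \{m\}$ isomorphically onto $S' \times \{h(m)\}$, so that I may write $f(s,m) = (\phi_m(s), h(m))$ with $\phi_m \in {\rm Aut}(S')$ for each $m$.

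The remaining and crucial point is that $\phi_m$ is independent of $m$. Replacing $f$ by $({\rm id}_{S'} \times h^{-1}) \circ f$ I may assume $h = {\rm id}_M$, so that $f$ becomes an automorphism of $S' \times M$ over $M$ and $m \mapsto \phi_m$ is an algebraic family of automorphisms of $S'$. Since $\nu : S' \to S$ is birational and $S$ is a K3 surface, a nonzero global vector field on $S'$ would push forward to a nonzero one on $S$, giving an injection $H^0(S', T_{S'}) \hookrightarrow H^0(S, T_S) = 0$; hence $H^0(S', T_{S'}) = 0$, and ${\rm Aut}(S')$, viewed as a subgroup of the discrete group ${\rm Aut}(S)$ as in Section~\ref{sect2}, is discrete. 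I would make the constancy precise by invoking the automorphism scheme $\underline{{\rm Aut}}_{S'}$, a group scheme locally of finite type over $\C$ with tangent space $H^0(S', T_{S'}) = 0$ at the identity, hence \'etale; the family $\{\phi_m\}$ is a morphism from the connected reduced variety $M$ to $\underline{{\rm Aut}}_{S'}$ and is therefore constant. Thus $\phi_m = \phi$ for all $m$, and $f(s,m) = (\phi(s), h(m))$ is the image of $(\phi, h) \in {\rm Aut}(S') \times {\rm Aut}(M)$, as required.

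The step I expect to be the genuine obstacle is precisely this last constancy argument: upgrading the pointwise statement ``each $\phi_m$ is an automorphism'' to ``the family is constant''. The clean way is the \'etale automorphism scheme above; alternatively, since $S'$ is a compact complex manifold, ${\rm Aut}(S')$ is a complex Lie group whose Lie algebra is $H^0(S', T_{S'}) = 0$, so it is discrete and any holomorphic family over the connected base $M$ is constant. Note that only the one-sided hypothesis (no non-constant morphism $S' \to M$), together with the vanishing of vector fields on $S'$, is used; no condition on morphisms $M \to S'$ is needed.
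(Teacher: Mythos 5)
Your proof is correct and takes essentially the same route as the paper: both use the hypothesis to show that $f$ preserves the projection $S' \times M \to M$, write $f(s,m) = (\phi_m(s), h(m))$ with $h \in {\rm Aut}\,(M)$ and $\phi_m \in {\rm Aut}\,(S')$, and conclude that $m \mapsto \phi_m$ is constant because $M$ is connected and ${\rm Aut}\,(S')$ is discrete (the paper gets discreteness from the inclusion ${\rm Aut}\,(S') \subset {\rm Aut}\,(S)$ of Section \ref{sect2}). Your \'etale automorphism scheme argument is simply a rigorous formalization of the paper's ``continuous map from a connected space to a discrete group is constant'' step, and your verification that the fibration is preserved spells out what the paper leaves implicit.
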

\begin{proof}
Let $f \in {\rm Aut}\, (S' \times M)$. It suffices to show that $f$ is of the form $f = (f_{S'}, f_M)$ with $f_{S'} \in {\rm Aut}\, (S')$ and $f \in {\rm Aut}\, (M)$.  
We deduce from the assumption that the natural fibration $\pi : S' \times M \to M$ is preserved under ${\rm Aut}\, (S' \times M)$. Therefore, $f$ is of the form 
$$f(x, y) = (f_y(x), f_M(y))\,\,  \text{with } x \in S' \text{ and } y \in M\,\, ,$$
where $f_M \in {\rm Aut}\, (M)$ and 
$f_y : S'  \to S' $
is an isomorphism. Hence we have a continuous map
$$F : M \to {\rm Aut}\, (S')\,\, ,\,\, y \mapsto f_y\,\, .$$
As $M$ is connected and ${\rm Aut}\, (S') \subset {\rm Aut}\, (S)$ is discrete, $F$ is a constant map. Thus, we can write  $f = (f_{S'}, f_M)$ for some $f_{S'} \in {\rm Aut}\, (S')$ and $f_M \in {\rm Aut}\, (M)$, as claimed.  
\end{proof}

We will end this section with the following lemma that will be used later in Section \ref{sect4}.

\begin{lemma} \label{l:S2-EP}
The image of $\Aut(S_2)$ by $r_{C,P}$ is a finite subset of $\C^\times$. 
\end{lemma}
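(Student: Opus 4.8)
The plan is to exploit that $\Aut(S_2)$ is, by the proof of Theorem \ref{thm21}(1), a subgroup of $\Aut(S,P)$, so that $r_{C,P}$ restricts to it, together with the fact that $r_{C,P}$ is a group homomorphism into $\C^\times$. First I would recall that $r_{C,P}\colon\Aut(S,P)\to\C^\times$ is multiplicative: for $f,g\in\Aut(S,P)$, both of which fix $P$ and preserve $C$, the chain rule on the one-dimensional space $T_{C,P}$ gives $d((f\circ g)|_C)_P=d(f|_C)_P\,d(g|_C)_P$. Restricting to the subgroup $\Aut(S_2)\subset\Aut(S,P)$, we obtain a homomorphism $r_{C,P}|_{\Aut(S_2)}\colon\Aut(S_2)\to\C^\times$ whose image is precisely the set in question.

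Next I would locate a large subgroup inside its kernel. Since $v_1\in T_{C,P}$ is a basis of $T_{C,P}$ and, for the very special triple, $df_P(v_1)=\alpha_1(f)v_1$, we have $r_{C,P}(f)=\alpha_1(f)$. In particular, every $f\in G(S,P)$ satisfies $df_P=\id_{T_{S,P}}$, hence $\alpha_1(f)=1$ and $f\in{\rm Ker}\,(r_{C,P}|_{\Aut(S_2)})$. Thus $G(S,P)$ is contained in the kernel of $r_{C,P}|_{\Aut(S_2)}$.

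The conclusion is then immediate from the finiteness already established in Theorem \ref{thm21}(1), where we proved $[\Aut(S_2):G(S,P)]\le k!\cdot|I|<\infty$. Since $G(S,P)\subseteq{\rm Ker}\,(r_{C,P}|_{\Aut(S_2)})\subseteq\Aut(S_2)$ and the index is multiplicative, we get $[\Aut(S_2):{\rm Ker}\,(r_{C,P}|_{\Aut(S_2)})]\le[\Aut(S_2):G(S,P)]<\infty$. As $r_{C,P}|_{\Aut(S_2)}$ is a homomorphism, its image has cardinality equal to this index and is therefore finite.

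I do not expect a genuine obstacle here: the statement is essentially a corollary of the finite-index computation carried out for Theorem \ref{thm21}(1). The only points needing a routine verification are that $r_{C,P}$ is a homomorphism and that it kills $G(S,P)$, both of which follow directly from the definitions and the very special condition. If one prefers to avoid invoking the homomorphism property, one can argue concretely via the index-$\le k!$ subgroup $G_0$ used in that proof, on which $r_{C,P}(f)=\alpha(f)\in I$ takes finitely many values, and then note that $\Aut(S_2)$ is covered by at most $k!$ cosets of $G_0$, giving at most $k!\cdot|I|$ values in total.
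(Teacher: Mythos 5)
Your proof is correct, but it takes a different route from the paper's. The paper argues element-wise and self-containedly: for $f\in\Aut(S_2)$ it writes $df_P(v_i)=\alpha_i(f)v_i$, notes that $\alpha_1(f)\alpha_2(f)$ lies in a finite set by the finiteness of the canonical representation (as $f^*\omega_S=\alpha_1(f)\alpha_2(f)\,\omega_S$), and that $\alpha_1(f)^{-1}\alpha_2(f)$ lies in a finite set because the induced action on $E_P$ in the affine coordinate $z=z_2/z_1$ is $z\mapsto\alpha_1(f)^{-1}\alpha_2(f)z$ and must permute the nonzero points $Q_1,\ldots,Q_k$; combining the two finite sets bounds $\alpha_1(f)=r_{C,P}(f)$ (and $\alpha_2(f)$) directly. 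You instead observe that $r_{C,P}|_{\Aut(S_2)}$ is a group homomorphism whose kernel contains $G(S,P)$, and then quote the finite-index bound $[\Aut(S_2):G(S,P)]\le k!\cdot|I|$ already established in the proof of Theorem \ref{thm21}(1), so the image is finite by the first isomorphism theorem. Your argument is shorter and more structural, essentially exhibiting the lemma as a corollary of work already done; the paper's argument redoes the two underlying finiteness mechanisms (canonical representation, permutation of the $Q_i$) inside the lemma, which has the mild advantage of being independent of the bookkeeping in Theorem \ref{thm21} and of showing that each eigenvalue $\alpha_1(f)$, $\alpha_2(f)$ separately lies in an explicitly described finite set. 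Both proofs ultimately rest on the same two ingredients, so logically they are close relatives, just packaged differently.

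One small inaccuracy in your closing remark: the proposed "homomorphism-free" variant via cosets of $G_0$ does not actually avoid multiplicativity. To pass from finiteness of $r_{C,P}$ on $G_0$ to finiteness on $\Aut(S_2)$ you must write $f=g_jh$ with $h\in G_0$ and use $r_{C,P}(f)=r_{C,P}(g_j)\,r_{C,P}(h)$, which is exactly the chain-rule identity $d((g_j\circ h)|_C)_P=d(g_j|_C)_P\cdot d(h|_C)_P$ (valid because $h(P)=P$). Since that identity is immediate, this costs nothing, but the variant is not genuinely a different argument.
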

\proof
Consider a map $f$ in $\Aut(S_2)$. As in Lemma \ref{lem21}, there are two numbers $\alpha_1(f)$ and $\alpha_2(f)$ in $\C^\times$ such that 
$$df_P(v_1)=\alpha_1(f)(v_1) \quad \text{and} \quad df_P(v_2)=\alpha_2(f)(v_2) \, .$$
These identities also imply that
$$f^*\omega_S= \alpha_1(f)\alpha_2(f) \omega_S \, .$$
Moreover, also as in Lemma  \ref{lem21}, the number $\alpha_1(f)\alpha_2(f)$ belongs to a finite subset of $\C^\times$. 

Now, consider the affine coordinate $z:=z_2/z_1$ on $E_P$. The restriction of $f$ to $E_P$ is then given by 
$$z\mapsto \alpha_1(f)^{-1}\alpha_2(f) z \, .$$
Since it preserves the set $\{Q_1,\ldots,Q_k\}$, the number $\alpha_1(f)^{-1}\alpha_2(f)$ belongs to a finite set which depends on 
$\{Q_1,\ldots,Q_k\}$. We conclude that $\alpha_1(f)$ and $\alpha_2(f)$ belong to a finite subset of $\C^\times$. The lemma follows.
\endproof

\section{Existence of a very special triple}\label{sect3} 

In this section, we show that there is a very special triple $(S, C, P)$ as in Definition \ref{def22}. 
From now on, consider the Kummer surface
$$S = {\rm Km}\, (E \times F)$$ 
associated to the product of two {\it mutually non-isogenous} elliptic curves $E$ and $F$. Then $S$ is a projective K3 surface of Picard number $18$, see e.g. \cite{Og89}. 
We specify the affine Weierstrass equation of $E$ with origin at $\infty$ as 
$$y^2 = x(x-1)(x-2)\quad \text{with} \quad (x,y)\in\C^2\,\, .$$ 
Note that $E/\langle -1_E \rangle = \P^1$ and 
the associated quotient map $E \to \P^1$ is given in the above affine coordinates by $(x,y)\mapsto x$. Moreover,
the points $0$, $1$, $2$ and $\infty$ of $\P^1$ are the branch points of this map.

Let $\{a_i\}_{i=1}^{4}$ and $\{b_i\}_{i=1}^{4}$ be the $2$-torsion subgroups of $F$ and $E$ respectively. Then $S$ contains the  following 24 smooth rational curves. There are $8$ smooth rational curves $E_i$, $F_i$, with $1 \le i \le 4$, arising from $8$ elliptic curves $E \times \{a_i\}$, $\{b_i\} \times F$ on $E \times F$, and $16$ exceptional curves $C_{ij}$, with $1\le i,j \le 4$, over the $16$ singular points of type $A_1$ on the quotient surface $E \times F/\langle -1_{E \times F}\rangle$. See Figure \ref{fig1} for the configuration of these $24$ smooth rational curves on $S$. 

In what follows, we use the notation in Figure \ref{fig1} and additionally set

\begin{figure}
\unitlength 0.1in
\begin{picture}(25.000000,24.000000)(-1.000000,-23.500000)
\put(4.500000, -22.000000){\makebox(0,0)[rb]{$F_1$}}%
\put(9.500000, -22.000000){\makebox(0,0)[rb]{$F_2$}}%
\put(14.500000, -22.000000){\makebox(0,0)[rb]{$F_3$}}%
\put(19.500000, -22.000000){\makebox(0,0)[rb]{$F_4$}}%
\put(0.250000, -18.500000){\makebox(0,0)[lb]{$E_1$}}%
\put(0.250000, -13.500000){\makebox(0,0)[lb]{$E_2$}}%
\put(0.250000, -8.500000){\makebox(0,0)[lb]{$E_3$}}%
\put(0.250000, -3.500000){\makebox(0,0)[lb]{$E_4$}}%
\put(6.000000, -16.000000){\makebox(0,0)[lt]{$C_{11}$}}%
\put(6.000000, -11.000000){\makebox(0,0)[lt]{$C_{12}$}}%
\put(6.000000, -6.000000){\makebox(0,0)[lt]{$C_{13}$}}%
\put(6.000000, -1.000000){\makebox(0,0)[lt]{$C_{14}$}}%
\put(11.000000, -16.000000){\makebox(0,0)[lt]{$C_{21}$}}%
\put(11.000000, -11.000000){\makebox(0,0)[lt]{$C_{22}$}}%
\put(11.000000, -6.000000){\makebox(0,0)[lt]{$C_{23}$}}%
\put(11.000000, -1.000000){\makebox(0,0)[lt]{$C_{24}$}}%
\put(16.000000, -16.000000){\makebox(0,0)[lt]{$C_{31}$}}%
\put(16.000000, -11.000000){\makebox(0,0)[lt]{$C_{32}$}}%
\put(16.000000, -6.000000){\makebox(0,0)[lt]{$C_{33}$}}%
\put(16.000000, -1.000000){\makebox(0,0)[lt]{$C_{34}$}}%
\put(21.000000, -16.000000){\makebox(0,0)[lt]{$C_{41}$}}%
\put(21.000000, -11.000000){\makebox(0,0)[lt]{$C_{42}$}}%
\put(21.000000, -6.000000){\makebox(0,0)[lt]{$C_{43}$}}%
\put(21.000000, -1.000000){\makebox(0,0)[lt]{$C_{44}$}}%
\special{pa 500 2200}%
\special{pa 500 0}%
\special{fp}%
\special{pa 1000 2200}%
\special{pa 1000 0}%
\special{fp}%
\special{pa 1500 2200}%
\special{pa 1500 0}%
\special{fp}%
\special{pa 2000 2200}%
\special{pa 2000 0}%
\special{fp}%
\special{pa 0 1900}%
\special{pa 450 1900}%
\special{fp}%
\special{pa 550 1900}%
\special{pa 950 1900}%
\special{fp}%
\special{pa 1050 1900}%
\special{pa 1450 1900}%
\special{fp}%
\special{pa 1550 1900}%
\special{pa 1950 1900}%
\special{fp}%
\special{pa 0 1400}%
\special{pa 450 1400}%
\special{fp}%
\special{pa 550 1400}%
\special{pa 950 1400}%
\special{fp}%
\special{pa 1050 1400}%
\special{pa 1450 1400}%
\special{fp}%
\special{pa 1550 1400}%
\special{pa 1950 1400}%
\special{fp}%
\special{pa 0 900}%
\special{pa 450 900}%
\special{fp}%
\special{pa 550 900}%
\special{pa 950 900}%
\special{fp}%
\special{pa 1050 900}%
\special{pa 1450 900}%
\special{fp}%
\special{pa 1550 900}%
\special{pa 1950 900}%
\special{fp}%
\special{pa 0 400}%
\special{pa 450 400}%
\special{fp}%
\special{pa 550 400}%
\special{pa 950 400}%
\special{fp}%
\special{pa 1050 400}%
\special{pa 1450 400}%
\special{fp}%
\special{pa 1550 400}%
\special{pa 1950 400}%
\special{fp}%
\special{pa 200 2000}%
\special{pa 600 1600}%
\special{fp}%
\special{pa 200 1500}%
\special{pa 600 1100}%
\special{fp}%
\special{pa 200 1000}%
\special{pa 600 600}%
\special{fp}%
\special{pa 200 500}%
\special{pa 600 100}%
\special{fp}%
\special{pa 700 2000}%
\special{pa 1100 1600}%
\special{fp}%
\special{pa 700 1500}%
\special{pa 1100 1100}%
\special{fp}%
\special{pa 700 1000}%
\special{pa 1100 600}%
\special{fp}%
\special{pa 700 500}%
\special{pa 1100 100}%
\special{fp}%
\special{pa 1200 2000}%
\special{pa 1600 1600}%
\special{fp}%
\special{pa 1200 1500}%
\special{pa 1600 1100}%
\special{fp}%
\special{pa 1200 1000}%
\special{pa 1600 600}%
\special{fp}%
\special{pa 1200 500}%
\special{pa 1600 100}%
\special{fp}%
\special{pa 1700 2000}%
\special{pa 2100 1600}%
\special{fp}%
\special{pa 1700 1500}%
\special{pa 2100 1100}%
\special{fp}%
\special{pa 1700 1000}%
\special{pa 2100 600}%
\special{fp}%
\special{pa 1700 500}%
\special{pa 2100 100}%
\special{fp}%
\end{picture}%
 \caption{Curves $E_i$, $F_j$ and $C_{ij}$}
 \label{fig1}
\end{figure}
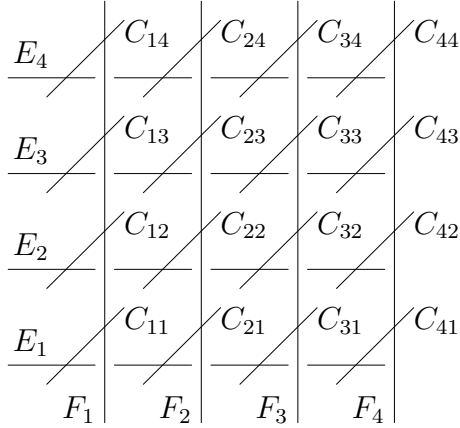

$$C := E_1 \subset S\,\, .$$
Since $C =  E/\langle -1_E \rangle$, we can use $x$ as an affine coordinate on $C$. In this coordinate,
the set of the four points 
$$\big\{ C \cap C_{11}, C \cap C_{21}, C \cap C_{31}, C \cap C_{41} \big\}$$ 
coincides with the set of branch points
$$\{0, 1, 2, \infty\} \, .$$

\begin{definition}\label{def31}
Using a renumeration of the 24 curves, if necessary, we can assume that in the affine coordinate $x$ 
$$C \cap C_{11} = \infty\,\, , \,\, C \cap C_{21} = 0\,\, ,\,\, C \cap C_{31} = 1 \,\, , \,\, C \cap C_{41} = 2\,\, .$$
We set $P$ equal to the first point, that is, 
$$P:=C \cap C_{11} = \infty\,\, .$$
\end{definition}

The main result of this section is the following result. Together with Theorem \ref{thm21}, it gives us a variety $V$ satisfying the properties (1) and (2) in Theorem \ref{thm1}. Later in Section \ref{sect4}, some more specific choices of parameters will be made  in order to get the property (3) in Theorem \ref{thm1}. 

\begin{theorem}\label{thm31} The triple $(S, C, P)$ is a very special triple. 
\end{theorem}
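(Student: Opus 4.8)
The plan is to verify in turn the three requirements for $(S,C,P)$ to be very special in the sense of Definition \ref{def22}: that the triple is \emph{special} (so that $\Aut(S,P)\subset\Aut(S,C)$ and $r_{C,P}$ makes sense), that $r_{S,P}(\Aut(S,P))$ is simultaneously diagonalizable in a basis $\{v_1,v_2\}$ with $v_1\in T_{C,P}$, and that the associated group $G(S,C,P)$ is not finitely generated. The first two conditions are configurational and should be handled together; the non-finite generation is the heart of the matter.

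For specialness and diagonalizability I would argue through the configuration of Figure \ref{fig1}. The sixteen exceptional curves $C_{ij}$ are intrinsically characterised inside $\mathrm{NS}(S)$ (they are the sixteen pairwise disjoint $(-2)$-curves attached to the Kummer lattice), so $\Aut(S)$ permutes them. Since they are pairwise disjoint and $P=C\cap C_{11}$ lies on $C_{11}$, the curve $C_{11}$ is the only one of the $C_{ij}$ through $P$; hence every $f\in\Aut(S,P)$ satisfies $f(C_{11})=C_{11}$. I would then identify $C=E_1$ as the central component of the $I_0^*$-fibre over $[a_1]$ of the elliptic fibration $S\to F/\langle -1\rangle$ with fibre $E$, and check that $E_1$ is the unique $(-2)$-curve through $P$ meeting $C_{11}$ transversally there and meeting exactly the four nodal curves $C_{11},C_{21},C_{31},C_{41}$; this forces $f(E_1)=E_1$, giving $\Aut(S,P)\subset\Aut(S,C)$. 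As $df_P$ then preserves the two distinct tangent lines $T_{C,P}$ and $T_{C_{11},P}$, it is ${\rm diag}(\alpha_1(f),\alpha_2(f))$ in the fixed basis $v_1\in T_{C,P}$, $v_2\in T_{C_{11},P}$, independently of $f$, which is exactly Definition \ref{def22}(1).

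For the non-finite generation I would first reduce to a cleaner group. Restriction to $C$ gives $\Aut(S,P)\to\Aut(C)$, whose image fixes $P=\infty$ and preserves the four points $\{0,1,2,\infty\}=C\cap(C_{11}\cup C_{21}\cup C_{31}\cup C_{41})$, hence lies in the finite group of affine maps permuting $\{0,1,2\}$. Its kernel $K$, consisting of the automorphisms fixing $C$ pointwise, therefore has finite index in $\Aut(S,P)$ and satisfies $K\subset G(S,C,P)\subset\Aut(S,P)$; by Proposition \ref{prop1}(1) these three groups are finitely generated or not simultaneously, so it is enough to show that $K$ is \emph{not} finitely generated.

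The main obstacle is precisely this last step, and it is where the non-isogeny of $E$ and $F$ enters. Since $S={\rm Km}(E\times F)$ has Picard number $18$ and hyperbolic Néron--Severi lattice, $\Aut(S)$ is infinite; I would exhibit infinitely many elements of $K$ --- for instance the fibrewise inversions of the infinitely many elliptic pencils on $S$ admitting $E_1$ as a section, each of which fixes $E_1$ pointwise --- and then show that the subgroup they generate surjects onto a non-finitely-generated abelian group, concluding by Proposition \ref{prop1}(3). The point to keep in mind is that the finiteness of the canonical representation used in Lemma \ref{lem21} (valid here because $E$ and $F$ are non-isogenous) forces all the infinitesimal data at $P$ to stay finite, so the failure of finite generation cannot be seen on $T_{S,P}$ and must be detected globally on $\mathrm{NS}(S)$. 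Organising this global argument --- equivalently, showing that the stabiliser in $\Aut(S)$ of the facet of the nef cone cut out by $E_1$ fails to be finitely generated even though $\Aut(S)$ itself is finitely generated --- is the real work.
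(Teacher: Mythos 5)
Your proposal breaks down at its foundation: the claim that the sixteen exceptional curves $C_{ij}$ are intrinsically characterised in ${\rm NS}(S)$ and hence permuted by ${\rm Aut}\,(S)$ is false for this surface, and the error propagates through both halves of your argument. The paper's own automorphism $f_1$ (Definition \ref{def32}), the Mordell--Weil translation by the section $C_{41}$ of $\varphi_{D_1}$ with zero section $C_{31}$, lies in ${\rm Aut}\,(S,P)$ and restricts to $C$ as $x\mapsto 2x$. Consequently $f_1(C_{41})$ is a $(-2)$-curve meeting $C$ at the point $x=4$; but the only nodal curves meeting $C$ are $C_{11},C_{21},C_{31},C_{41}$, meeting it at $\infty,0,1,2$ respectively, so $f_1(C_{41})$ is not among the sixteen $C_{ij}$. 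Thus ${\rm Aut}\,(S)$ does not preserve the Kummer configuration (there are many sets of sixteen disjoint $(-2)$-curves on $S$), your characterisation of $E_1$ via the four curves $C_{i1}$ is not ${\rm Aut}\,(S,P)$-equivariant, and --- fatally --- your assertion that the image of the restriction ${\rm Aut}\,(S,P)\to{\rm Aut}\,(C)$ preserves $\{0,1,2,\infty\}$ and is finite is simply wrong: that image contains $x\mapsto 2x$ and $x\mapsto x+1$. The paper obtains specialness by a different mechanism that you would need here: the involution $\theta$ induced by $(1_E,-1_F)$ satisfies $\theta^*=\id$ on ${\rm Pic}\,(S)$, hence commutes with every automorphism by the global Torelli theorem, and its fixed locus is exactly $B=\cup_i E_i\cup\cup_j F_j$; so ${\rm Aut}\,(S)={\rm Aut}\,(S,B)$ and $C$ is the unique curve of $B$ through $P$ (Lemmas \ref{lem31}--\ref{lem32}), with diagonalizability coming from comparing $d\theta_P$ along $C$ and along $C_{11}$ (Lemmas \ref{lem33}--\ref{lem34}).

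The false finiteness claim then destroys your reduction for the core statement. Since the image of ${\rm Aut}\,(S,P)\to{\rm Aut}\,(C)$ is infinite, your kernel $K$ (automorphisms fixing $C$ pointwise) has infinite index, so Proposition \ref{prop1}(1) cannot transfer finite generation between $K$, $G(S,C,P)$ and ${\rm Aut}\,(S,P)$. Worse, in this example the failure of finite generation lives exactly in the image you discarded as finite: the paper shows that $f_1^{-n}\circ f_2\circ f_1^{n}$ restricts to $C$ as $x\mapsto x+2^{-n}$, so the image $\Gamma$ of $G(S,C,P)$ in ${\rm Aut}\,(C,P)$ contains a copy of $\big\langle 2^{-n}\,|\,n\in\Z_{>0}\big\rangle\subset\Q$, which is not finitely generated; one concludes with Proposition \ref{prop1}(2)--(3). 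These witnesses act nontrivially on $C$, hence are invisible to your group $K$, and your proposed substitute (fibrewise inversions of pencils with section $E_1$, ``surjecting onto a non-finitely generated abelian group'') is left unsubstantiated and lacks the essential mechanism of the paper: the coexistence, on the same component $C$, of a multiplicative degeneration (the $I_8$-type fiber $D_1$, giving the multiplication $x\mapsto 2x$ via Proposition \ref{prop32}(3)) and an additive one (the fiber $D_2$, giving the translation $x\mapsto x+1$ via Proposition \ref{prop32}(2)), whose conjugation produces arbitrarily small denominators. As you acknowledge, this is ``the real work,'' and it is precisely the part of the proof your proposal does not contain.
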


The proof of this theorem occurs the rest of this section. It is partly based on \cite{Og89} and some arguments employed by \cite{Le17} and \cite{Og17}. 
We will check all requirements for a very special triple in Definition \ref{def22}.

By definition, $S$ is a smooth projective K3 surface, $C \subset S$ is a smooth rational curve and $P \in C$. 
Let $\theta$ be the automorphism of $S$ induced by the automorphism $(1_E, -1_F)$ of $E \times F$. Set 
$$B := \cup_{i=1}^{4} E_i \cup \cup_{j=1}^{4} F_j\,\, .$$

\begin{lemma}\label{lem31} 
The following properties hold.
\begin{enumerate}
\item $\theta^* = \id$ on ${\rm Pic}\, (S)$ and $\theta^* \omega_S = -\omega_S$.\item $f \circ \theta = \theta \circ f$ for all $f \in {\rm Aut}\, (S)$. 
\item The fixed locus of $\theta$ is exactly the set $B$. 
\item ${\rm Aut}(S) = {\rm Aut}\, (S, B)$. 
\end{enumerate}
\end{lemma}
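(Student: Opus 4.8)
The plan is to establish the four properties in the order $(1)$, $(3)$, $(4)$, $(2)$, since the structural facts feed into the commutation statement at the end.

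\textbf{Property (1).} First I would analyse the involution $\theta$ induced by $(1_E, -1_F)$ directly on cohomology. The form $\omega_S$ descends from the product form $\omega_E \wedge \omega_F$ on $E \times F$, and since $(1_E, -1_F)^*(\omega_E \wedge \omega_F) = \omega_E \wedge (-\omega_F) = -(\omega_E\wedge\omega_F)$, I get $\theta^*\omega_S = -\omega_S$ immediately. For the action on $\Pic(S)$, I would use that $\mathrm{Km}(E\times F)$ has Picard number $18$ and that $\Pic(S)_{\Q}$ is spanned by the classes of the $24$ curves $E_i$, $F_j$, $C_{ij}$ of Figure \ref{fig1} (this is the Shioda--Inose / Kummer lattice description from \cite{Og89}). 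The automorphism $\theta$ fixes each $E\times\{a_i\}$ setwise and each $\{b_i\}\times F$ setwise (as $-1_F$ permutes the $2$-torsion but $\theta$ comes from a fibre-preserving map), so $\theta$ permutes the $24$ curves among themselves in a way that fixes their classes; checking on the spanning set then gives $\theta^*=\id$ on $\Pic(S)$.

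\textbf{Properties (3) and (4).} For $(3)$, the fixed locus of $(1_E,-1_F)$ on $E\times F$ is $E\times F[2]$ together with $E[2]\times F$, i.e. the union of the horizontal curves $E\times\{a_i\}$ and the $2$-torsion vertical slices; passing to the Kummer surface, these become exactly the curves $E_i$ and $F_j$ comprising $B$, so $\mathrm{Fix}(\theta)=B$. I should be careful at the $16$ nodes, checking that $\theta$ acts nontrivially on the exceptional curves $C_{ij}$ (so they are not in the fixed locus), which follows because $-1_F$ acts as $-1$ on the relevant tangent direction. Property $(4)$ is the geometric heart: since $\theta^*=\id$ on $\Pic(S)$ and $\theta^*\omega_S=-\omega_S$, the involution $\theta$ is determined intrinsically by $(S,\omega_S)$ up to the finite canonical representation, so any $f\in\Aut(S)$ carries $\mathrm{Fix}(\theta)$ to the fixed locus of $f\theta f^{-1}$; I want to show $f\theta f^{-1}=\theta$, whence $f(B)=B$, giving $\Aut(S)=\Aut(S,B)$. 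This naturally folds into proving $(2)$.

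\textbf{Property (2), the main obstacle.} I expect the commutation $f\circ\theta=\theta\circ f$ to be the crux. The strategy is that $\theta$ is the unique nontrivial automorphism acting trivially on $\Pic(S)$ and by $-1$ on $H^0(S,\Omega_S^2)=\C\omega_S$; indeed, by the Torelli theorem and the finiteness of the kernel of $\Aut(S)\to \mathrm{O}(\Pic(S))\times \mathrm{GL}(H^0(\Omega^2_S))$, the subgroup of automorphisms acting trivially on $\Pic(S)$ is controlled by its action on $\omega_S$ via the finite canonical representation $\rho$. For any $f\in\Aut(S)$, the conjugate $f\theta f^{-1}$ also acts trivially on $\Pic(S)$ (as $f^*$ and $\theta^*$ commute on $\Pic$, with $\theta^*=\id$) and sends $\omega_S\mapsto -\omega_S$; so $f\theta f^{-1}$ and $\theta$ differ by an automorphism acting trivially on both $\Pic(S)$ and $\omega_S$, which by Torelli must be $\id_S$. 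Hence $f\theta f^{-1}=\theta$, i.e. $(2)$. The delicate point I would want to verify carefully is the uniqueness input: that the only automorphism of $S$ acting trivially on $\Pic(S)$ and fixing $\omega_S$ is the identity. For this I would invoke that a K3 automorphism acting trivially on $H^2(S,\Z)=\Pic(S)\oplus T(S)$ (transcendental part) and on $\omega_S$ is trivial by Torelli; since $E,F$ are non-isogenous the transcendental lattice is handled by the Hodge structure, and $\theta^*$ acts on $T(S)$ by $-1$ while $f^*$ preserves the Hodge decomposition, closing the argument. Once $(2)$ is in hand, $(4)$ follows since $f(\mathrm{Fix}\,\theta)=\mathrm{Fix}(f\theta f^{-1})=\mathrm{Fix}\,\theta=B$.
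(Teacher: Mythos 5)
Your overall architecture is essentially the paper's: (1) via the fact that the $24$ curves span ${\rm Pic}\,(S)\otimes\Q$, that ${\rm Pic}\,(S)$ is torsion free and that $\theta$ preserves each curve; (2) via the global Torelli theorem after controlling the action on both ${\rm Pic}\,(S)$ and the transcendental part; and (4) from (2) and (3) via $f({\rm Fix}\,(\theta))={\rm Fix}\,(f\theta f^{-1})$. Your packaging of (2) --- showing that $f\theta f^{-1}$ has the same action as $\theta$ on ${\rm Pic}\,(S)$ and on $\omega_S$, then invoking a uniqueness statement --- is only a mild rearrangement of the paper's argument, which instead shows that $\theta^*$ equals $\id$ on ${\rm NS}\,(S)$ and $-\id$ on ${\rm NS}\,(S)^\perp$, hence commutes with every $f^*$ on $H^2(S,\Z)$, and then applies Torelli to $f\theta$ and $\theta f$; both versions rest on exactly the same three inputs.

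Two steps need repair. First, in (3), your claim that the fixed locus of $(1_E,-1_F)$ on $E\times F$ is ``$E\times F[2]$ together with $E[2]\times F$'' is false: upstairs the fixed locus is only $E\times F[2]$, since $(b_i,y)\mapsto (b_i,-y)$ moves every $y\notin F[2]$. The curves $\{b_i\}\times F$ give fixed curves only after passing to the quotient, because a point $[z]$ of $(E\times F)/\langle -1\rangle$ is fixed by $\theta$ if and only if $\theta(z)=\pm z$, and on $E[2]\times F$ one has $\theta(z)=-z$. Omitting the case $\theta(z)=-z$ would yield ${\rm Fix}\,(\theta)=\cup_i E_i$ instead of $B$, so this case is exactly what makes (3), and hence (4) and Lemma \ref{lem32}, correct. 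Second, the uniqueness claim at the heart of your (2) --- that an automorphism $g$ with $g^*=\id$ on ${\rm Pic}\,(S)$ and $g^*\omega_S=\omega_S$ is the identity --- is precisely where the paper's Lefschetz $(1,1)$ / minimality argument enters, and your sketch of it is muddled: the point is that ${\rm ker}\,(g^*-\id)$ intersected with $T(S)\otimes\Q$ is a rational subspace whose complexification contains $\omega_S$, so minimality of the transcendental lattice forces $g^*=\id$ on $T(S)\otimes \Q$; then $g^*=\id$ on the finite-index sublattice ${\rm NS}\,(S)\oplus T(S)$ of $H^2(S,\Z)$, hence on $H^2(S,\Z)$ by torsion-freeness, and Torelli gives $g=\id$. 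Note in this connection that non-isogeny of $E$ and $F$ plays no role in this step (it only fixes the Picard number), and that ${\rm Pic}\,(S)\oplus T(S)$ is in general only a finite-index sublattice of $H^2(S,\Z)$, not equal to it.
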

\begin{proof} 
This lemma was obtained in \cite[Lemmas (1.3), (1.4)]{Og89}.   We briefly recall here the proof since some ideas will be used later.

Observe that ${\rm Pic}\, (S)$ is a torsion free group, ${\rm Pic}\, (S) \otimes \Q$ is generated by the above $24$ rational curves and $\theta$ preserves these $24$ curves. It follows immediately that  $\theta^* = \id$ on ${\rm Pic}\, (S)$. By the definition of $\theta$, we also obtain $\theta^*\omega_S = -\omega_S$. This shows (1). 

By (1), we have $\theta^* \circ f^* = f^* \circ \theta^*$ on ${\rm Pic}\, (S)$ and also on $H^0(S, \Omega_S^2)$ because $h^0(S, \Omega_S^2) = 1$. Recall that ${\rm Pic}\, (S) \simeq {\rm NS}\, (S) \subset H^2(S, \Z)$ and 
$$\big( {\rm NS}\, (S) \otimes \Q  \big) \oplus  \big( {\rm NS}\, (S)^{\perp}_{H^2(S, \Z)} \otimes \Q \big) = H^2(S, \Q)\,\, .$$ 
Moreover, by Lefschetz' $(1, 1)$-theorem, ${\rm NS}\, (S)^{\perp}_{H^2(S, \Z)}$ is the minimal primitive sublattice of $H^2(S, \Z)$ such that 
$$\C[\omega_S] \in {\rm NS}\, (S)^{\perp}_{H^2(S, \Z)} \otimes \C\,\, .$$ 
It follows that $\theta^* = -\id$ on ${\rm NS}\, (S)^{\perp}_{H^2(S, \Z)}$. Therefore, $\theta^* \circ f^* = f^* \circ \theta^*$ on $H^2(S, \Q)$, and also 
on $H^2(S, \Z)$ as this group is torsion free. Thus, $f \circ \theta = \theta \circ f$ by the global Torelli theorem for K3 surfaces, see e.g. \cite[Chapter VIII, Sect. 11]{BHPV04}. This proves (2). 

The assertion (3) is geometrically immediate from the shape of $\theta$. The assertion (4) follows from (2) and (3). 
\end{proof}

\begin{lemma}\label{lem32} 
We have ${\rm Aut}\, (S, P) \subset {\rm Aut}\, (S, C)$, or equivalently, $f(C) = C$ for every
$f \in {\rm Aut}\, (S, P)$.  In particular, $(S, C, P)$ is a special triple as in Definition \ref{def21}. 
\end{lemma}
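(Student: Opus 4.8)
The plan is to combine the global constraint from Lemma \ref{lem31}(4), namely the identity $\Aut(S) = \Aut(S, B)$, with the purely combinatorial fact that $C = E_1$ is the \emph{unique} irreducible component of $B$ passing through $P$. These two inputs immediately force $f(C) = C$ for any $f$ fixing $P$.

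First I would observe that any $f \in \Aut(S, P)$ satisfies $f(B) = B$ by Lemma \ref{lem31}(4). Since $f$ is an automorphism of $S$, it carries irreducible curves to irreducible curves, and therefore it permutes the eight irreducible components $E_1, \ldots, E_4, F_1, \ldots, F_4$ of $B$. In particular $f(C) = f(E_1)$ is again one of these eight curves. Combining this with $f(P) = P$ and $P \in C$, I get $P = f(P) \in f(C)$, so that $f(C)$ is a component of $B$ which contains the point $P$.

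It then remains to verify the incidence claim that $E_1$ is the only component of $B$ through $P$, and this is the one place where the geometry of the Kummer surface is used. The curves $E_1, \ldots, E_4$ arise from the mutually disjoint elliptic curves $E \times \{a_i\}$ and are hence pairwise disjoint; the $F_j$ are pairwise disjoint for the same reason; and no $E_i$ meets any $F_j$ on $S$, as on the Kummer surface such pairs are separated by the exceptional curves $C_{ij}$ (see Figure \ref{fig1}). Since $P = C \cap C_{11}$ lies on $E_1$ but on none of the remaining seven components of $B$, the component $f(C)$ containing $P$ must equal $E_1 = C$. This yields $\Aut(S, P) \subset \Aut(S, C)$, i.e. $f(C) = C$ for all $f \in \Aut(S, P)$; the conclusion that $(S, C, P)$ is a special triple then follows directly from Definition \ref{def21}, recalling that $S$ is a K3 surface, $C \simeq \P^1$ and $P \in C$. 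I expect the only delicate point to be the correct reading of these incidence relations off the configuration of the twenty-four rational curves; beyond that there is no real obstacle, and in particular no further appeal to the Torelli-type arguments behind Lemma \ref{lem31}(2) and (3) is needed, since everything we use is already packaged into Lemma \ref{lem31}(4).
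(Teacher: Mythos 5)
Your proof is correct and follows essentially the same route as the paper: both rest on the identity $\Aut(S) = \Aut(S,B)$ from Lemma \ref{lem31}(4) together with the observation that $C = E_1$ is the unique component of $B$ passing through $P$. The only difference is that you spell out the incidence verification (the $E_i$ and $F_j$ being pairwise disjoint and separated by the $C_{ij}$) which the paper leaves implicit, so there is nothing to correct.
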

\begin{proof} By Lemma \ref{lem31}, we have ${\rm Aut}(S) = {\rm Aut}(S, B)$. 
As $C$ is the unique curve containing $P$ among the $8$ curves in $B$, the result follows.  
\end{proof}

\begin{lemma}\label{lem33} Let $R \subset S$ be a smooth rational curve such that $R \not\subset B$. Then the following properties hold.
\begin{enumerate}
\item $\theta(R) = R$ and $\theta|_R \in {\rm Aut}\, (R)$ is of order $2$ with exactly two fixed points. Moreover, $d(\theta|_R)_Q = -1$ at each fixed point $Q$ of $\theta|_R$. 
\item Assume furthermore that $P \in R$. Then ${\rm mult}_P(R, C) = 1$ and ${\rm mult}_P(f(R), R) \ge 2$ for all $f \in {\rm Aut}\, (S, P)$. \end{enumerate}
Here, the notation ${\rm mult}_P$ stands for the local multiplicity of intersection at the point $P$ with the convention that ${\rm mult}_P(f(R), R) = \infty$ when $f(R) = R$. 
\end{lemma}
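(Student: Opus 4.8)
The plan is to treat the two assertions in turn, using the differential $d\theta_P$ as the main tool.

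For assertion (1), I would first show $\theta(R) = R$: since $R$ is a smooth rational curve on the K3 surface $S$, adjunction gives $R^2 = -2$, so $R$ is the unique effective divisor in its class. As $\theta$ is an involution we have $\theta_* = \theta^*$, which is the identity on $\Pic(S)$ by Lemma \ref{lem31}(1); hence $[\theta(R)] = [R]$, and rigidity of $(-2)$-curves forces $\theta(R) = R$. The restriction $\theta|_R$ then has order dividing $2$, and it cannot be the identity, for otherwise $R$ would lie in the fixed locus of $\theta$, which is exactly $B$ by Lemma \ref{lem31}(3), against $R \not\subset B$. So $\theta|_R$ is a non-trivial involution of $R \simeq \P^1$, which necessarily has exactly two fixed points. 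At each fixed point $Q$ the eigenvalue $d(\theta|_R)_Q$ squares to $1$; since a finite-order automorphism of a curve fixing a point with trivial differential must be the identity, the eigenvalue cannot be $+1$, so $d(\theta|_R)_Q = -1$.

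The core of assertion (2) will be a computation of $d\theta_P$ on $T_{S,P}$. Because $C = E_1 \subset B$ is fixed pointwise by $\theta$, the map $d\theta_P$ restricts to the identity on $T_{C,P}$, contributing an eigenvalue $+1$. On the other hand $\theta^*\omega_S = -\omega_S$ by Lemma \ref{lem31}(1), and since $\omega_S$ is nowhere vanishing, evaluating at the fixed point $P$ gives $\det(d\theta_P) = -1$. Hence the second eigenvalue is $-1$ and the $(-1)$-eigenspace is a line transverse to $T_{C,P}$. Given this, if $R \not\subset B$ passes through $P$ then $P$ is a fixed point of $\theta|_R$, so assertion (1) yields $d(\theta|_R)_P = -1$; thus $T_{R,P}$ lies in the $(-1)$-eigenspace and is transverse to $T_{C,P}$, which is exactly ${\rm mult}_P(R,C) = 1$.

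To finish, let $f \in \Aut(S,P)$. Using $\Aut(S) = \Aut(S,B)$ from Lemma \ref{lem31}(4) together with $f(P) = P$, I would note that $f(R)$ is again a smooth rational curve with $f(R) \not\subset B$ and $P \in f(R)$. Applying the tangent computation to both $R$ and $f(R)$ shows that $T_{R,P}$ and $T_{f(R),P}$ both coincide with the $(-1)$-eigenline of $d\theta_P$, so $R$ and $f(R)$ are tangent at $P$ and therefore ${\rm mult}_P(f(R),R) \ge 2$ (with the stated convention when $f(R)=R$). The main obstacle is the eigenvalue computation for $d\theta_P$: once its $(-1)$-eigenspace is seen to be one-dimensional, every curve through $P$ not contained in $B$ is forced into that single tangent direction, which simultaneously delivers transversality with $C$ and the mutual tangency of $R$ with all of its images.
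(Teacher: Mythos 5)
Your proposal is correct and follows essentially the same route as the paper: part (1) via rigidity of $(-2)$-curves together with $\theta^*=\id$ on ${\rm Pic}\,(S)$ and the fixed-locus description, and part (2) via the eigenvalues of $d\theta_P$ along $C$, $R$ and $f(R)$. Your explicit computation $\det(d\theta_P)=-1$ from $\theta^*\omega_S=-\omega_S$, pinning down the $(-1)$-eigenspace as a single line, is a slightly more explicit justification of the step the paper states tersely (that curves with derivative $-1$ at $P$ cannot meet each other transversally there), but it is the same underlying argument.
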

\begin{proof} 
By the adjunction formula for K3 surfaces, any smooth rational curve in $S$ has self-intersection $-2$. 
We then deduce that $|R| = \{R\}$.
As $(\theta^{-1})^* = \id$ on ${\rm Pic}\, (S)$, it follows that $(\theta^{-1})^*R \in |R|$, from which the identity $\theta(R) = R$ follows. As $\theta$ is of order $2$ and $R$ is not contained in the fixed locus $B$ of $\theta$, the order of $\theta|_R$ is exactly $2$. As $R \simeq \P^1$, the involution $\theta|_R$ has exactly two fixed points and $d(\theta|_R)_Q = -1$ at any fixed point $Q$. This proves (1). 

Observe that $d(\theta|_C)_P = 1$  by $\theta|_C = \id_C$ and $d(\theta|_R)_P = -1$ by (1). It follows that ${\rm mult}_P(R, C) = 1$. Let $f$ be an automorphism in ${\rm Aut}\, (S, P)$. We have $f(P)=P$ and hence $P \in f(R)$ since $P\in R$.
By Lemma \ref{lem31}(4), we have $f(B) = B$.  Using that $R\not\subset B$, we obtain that $f(R) \not\subset B$ because $f$ is an automorphism. Applying (1) to $f(R)$ instead of $R$ gives $d(\theta|_{f(R)})_P = -1$. Finally, as $d(\theta|_R)_P = d(\theta|_{f(R)})_P =-1$ and $d(\theta|_C)_P = 1$, the two curves $f(R)$ and $R$ cannot intersect transversally at $P$. It follows that ${\rm mult}_P(f(R), R)$ is at least 2.  
\end{proof}

Recall that the representation $r_{S, P}$ is defined by
$$r_{S, P} : {\rm Aut}\, (S, P) \to {\rm GL}(T_{S, P})\,\, ;\,\, f \mapsto df_P\,\, .$$

\begin{lemma}\label{lem34} 
${\rm Im}\, (r_{S, P})$ is simultaneously diagonalizable in the sense of Definition \ref{def22}(1). 
\end{lemma}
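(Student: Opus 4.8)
The plan is to use the commuting involution $\theta$ supplied by Lemma \ref{lem31}. The crucial point is that $d\theta_P$ has two \emph{distinct} eigenvalues on $T_{S,P}$, and that every $df_P$ commutes with it; an operator commuting with a diagonalizable endomorphism that has distinct eigenvalues is forced to be diagonal in the same eigenbasis, which is exactly the simultaneous diagonalizability we need.

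First I would compute the action of $d\theta_P$ on $T_{S,P}$. Since $C \subset B$ and $B$ is the pointwise fixed locus of $\theta$ by Lemma \ref{lem31}(3), we have $\theta|_C = \id_C$, so $d\theta_P$ acts as the identity on $T_{C,P}$; choose $v_1 \in T_{C,P}$, so that $d\theta_P(v_1) = v_1$. Next, the exceptional curve $C_{11}$ is a smooth rational curve through $P$ with $C_{11} \not\subset B$, so Lemma \ref{lem33}(1) gives $\theta(C_{11}) = C_{11}$ together with $d(\theta|_{C_{11}})_P = -1$, the point $P$ being a fixed point of $\theta$. Hence $d\theta_P$ preserves $T_{C_{11},P}$ and acts there by $-1$; choose $v_2 \in T_{C_{11},P}$, so that $d\theta_P(v_2) = -v_2$. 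By Lemma \ref{lem33}(2) we have ${\rm mult}_P(C_{11}, C) = 1$, so $C$ and $C_{11}$ meet transversally at $P$ and $\{v_1, v_2\}$ is a basis of $T_{S,P}$ with $v_1 \in T_{C,P}$. In this basis $d\theta_P$ is the diagonal involution with entries $1$ and $-1$.

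Next I would bring in an arbitrary $f \in \Aut(S, P)$. By Lemma \ref{lem31}(2) we have $f \circ \theta = \theta \circ f$, and since both $f$ and $\theta$ fix $P$, differentiating at $P$ and using the chain rule yields $df_P \circ d\theta_P = d\theta_P \circ df_P$. Thus $df_P$ lies in the commutant of $d\theta_P$ in ${\rm GL}(T_{S,P})$. As the two eigenvalues $1$ and $-1$ of $d\theta_P$ are distinct, its only eigenlines are $\langle v_1 \rangle$ and $\langle v_2 \rangle$, and any operator commuting with $d\theta_P$ must preserve each of them. Therefore $df_P$ is diagonal in the basis $\{v_1, v_2\}$, with $v_1 \in T_{C,P}$, which is precisely the statement of Definition \ref{def22}(1).

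There is no serious obstacle once $\theta$ is used in this way; the only point needing care is to confirm that $d\theta_P$ genuinely has distinct eigenvalues and is not $\pm\,\id$. This is exactly what Lemma \ref{lem33} provides, by exhibiting along $C_{11}$ a transverse direction on which $\theta$ acts by $-1$, complementary to the fixed direction $T_{C,P}$.
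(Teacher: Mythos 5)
Your proof is correct, and it reaches the conclusion by a route that differs in an interesting way from the paper's. The paper's proof uses the same basis $v_1 \in T_{C,P}$, $v_2 \in T_{C_{11},P}$ (a basis by the transversality claim of Lemma \ref{lem33}(2)), but it gets the invariance of the two lines geometrically: $df_P$ preserves $\C v_1$ because $f(C)=C$, and it preserves $\C v_2$ because the tangency statement ${\rm mult}_P(f(C_{11}),C_{11})\ge 2$ of Lemma \ref{lem33}(2) forces $T_{f(C_{11}),P}=T_{C_{11},P}$. You instead never invoke that tangency statement (nor even $f(C)=C$): you use the commutation $f\circ\theta=\theta\circ f$ of Lemma \ref{lem31}(2), compute $d\theta_P={\rm diag}(1,-1)$ in the basis $\{v_1,v_2\}$ from Lemma \ref{lem31}(3) and Lemma \ref{lem33}(1), and conclude by the standard fact that the commutant of a diagonalizable operator with distinct eigenvalues preserves each eigenline. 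The two arguments rest on the same underlying mechanism -- the paper's own proof of the tangency claim in Lemma \ref{lem33}(2) is precisely the observation that $T_{R,P}$, $T_{f(R),P}$ and $T_{C,P}$ sit in the $(\mp 1)$-eigenspaces of $d\theta_P$ -- but your packaging as a commutant argument is cleaner and more self-contained: it isolates the principle that a commuting involution whose differential at $P$ has distinct eigenvalues rigidifies the eigendirections, and it recovers the tangency statement of Lemma \ref{lem33}(2) as a corollary rather than using it as an input.
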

\begin{proof} Choose $v_1 \in T_{C, P} \setminus \{0\}$ and $v_2 \in T_{R, P}\setminus \{0\}$ with $R:=C_{11}$. Then $\{v_1, v_2 \}$ is a basis of $T_{S, P}$ and by Lemma \ref{lem33}(2), $df_P$ preserves $\C v_1$ and $\C v_2$.  Thus, the result follows.  
\end{proof}

In order to complete the proof of Theorem \ref{thm31}, it remains to check the requirement in  Definition \ref{def22}(2).
We have the following proposition. 

\begin{proposition}\label{prop31}
The group $G(S, C, P)$ associated to the triple $(S, C, P)$, introduced in Definition \ref{def20}, is not finitely generated. 
\end{proposition}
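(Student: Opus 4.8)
The plan is to first describe $G(S,C,P)$ geometrically, then reduce the statement to the non-finite generation of the stabilizer of the class $[C]$ in $\Aut(S)$, and finally to establish that non-finite generation using the elliptic fibration structure of $S$.

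\emph{Step 1: a concrete description of $G(S,C,P)$.} Let $f \in \Aut(S,P)$. By Lemma \ref{lem32} we have $f(C) = C$, and by Lemma \ref{lem31}(4) the map $f$ preserves the configuration of the $24$ rational curves together with their mutual incidences. The curves among these $24$ that meet $C = E_1$ are exactly $C_{11},C_{21},C_{31},C_{41}$ (see Definition \ref{def31}), so $f$ permutes them; hence $f|_C$ permutes the four points $\{C \cap C_{i1}\} = \{0,1,2,\infty\}$ while fixing $P = \infty$. Suppose in addition that $f \in G(S,C,P)$, i.e.\ $d(f|_C)_P = 1$. In the affine coordinate $x$ on $C$, the automorphism $f|_C$ is then a M\"obius transformation fixing $\infty$ with derivative $1$ there, hence a translation $x \mapsto x+b$; a translation preserving the finite set $\{0,1,2\}$ must be the identity. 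Therefore
$$G(S,C,P) = \{\, f \in \Aut(S) : f|_C = \id_C \,\}.$$

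\emph{Step 2: reduction to a class stabilizer.} As in the proof of Lemma \ref{lem33}, $C$ is a smooth rational curve with $|C| = \{C\}$, so for $f \in \Aut(S)$ one has $f^*[C] = [C]$ if and only if $f(C) = C$. Put ${\rm Stab}([C]) := \{\, f \in \Aut(S) : f^*[C] = [C] \,\}$. Restriction to $C$ defines a homomorphism ${\rm Stab}([C]) \to \Aut(C,\{0,1,2,\infty\})$ whose image is finite (the group of automorphisms of $\P^1$ permuting four fixed points is finite) and whose kernel is exactly $G(S,C,P)$ by Step 1. Hence $G(S,C,P)$ has finite index in ${\rm Stab}([C])$, and by Proposition \ref{prop1}(1) it suffices to prove that ${\rm Stab}([C])$ is not finitely generated.

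\emph{Step 3: non-finite generation, and the main obstacle.} The plan here is to produce a surjective homomorphism from a finite-index subgroup of ${\rm Stab}([C])$ onto an infinitely generated abelian group and then to invoke Proposition \ref{prop1}(3). Using Kodaira's theory of elliptic surfaces together with Oguiso's analysis of the Jacobian fibrations on the Kummer surface $S = {\rm Km}(E\times F)$ in \cite{Og89}, the involution $\theta$, and the tangency phenomenon of Lemma \ref{lem33}, I would exhibit an infinite family of smooth rational curves through $P$ --- all tangent at $P$ to the distinguished direction $v_2$ --- arising as orbits of infinite-order automorphisms, and record, at each curve of this family, a local intersection datum at $P$ (equivalently, the induced action on the relevant part of ${\rm NS}(S)$) to define the target abelian group.

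I expect the main obstacle to lie precisely in this last step. Since the Mordell--Weil group of any single elliptic fibration is finitely generated, the infinite generation cannot come from one fibration; it must be assembled from the infinitely many elliptic fibrations --- equivalently the infinitely many walls of the ample cone --- that ${\rm Stab}([C])$ permutes on the hyperbolic lattice $[C]^{\perp}$ of signature $(1,16)$. Thus, although $\Aut(S)$ itself is finitely generated, the symmetry group of the single face $[C]^{\perp}$ of the nef cone need not be, and the delicate point is to certify that infinitely many independent components of the homomorphism above are genuinely realized by automorphisms fixing $[C]$. This is exactly where the explicit description of $\Aut(S)$ from \cite{Og89} must be combined with the arguments of \cite{Le17}.
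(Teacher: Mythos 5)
Your Step~1 contains a false claim, and it breaks the whole reduction. From Lemma~\ref{lem31}(4) you only get that every $f\in\Aut(S)$ preserves $B$, the union of the \emph{eight} curves $E_i,F_j$; nothing forces $f$ to permute the sixteen exceptional curves $C_{ij}$, and in general it does not. Concretely, the Mordell--Weil translation $f_2$ (translation by the section $C_{31}$ of the fibration $\varphi_{D_2}$ with additive fiber $D_2$) lies in $\Aut(S,P)$ and restricts to $C$ as $x\mapsto x+1$: it fixes $P=\infty$ but sends $2\mapsto 3\notin\{0,1,2,\infty\}$, so $f_2(C_{41})$ is a rational curve through $C$ that is not among the $24$ curves. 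Hence $f|_C$ need not permute $\{0,1,2,\infty\}$, the restriction map of your Step~2 does not even land in $\Aut(C,\{0,1,2,\infty\})$, and the finite-index reduction to ${\rm Stab}([C])$ collapses. Worse, your conclusion $G(S,C,P)=\{f\in\Aut(S): f|_C=\id_C\}$ is demonstrably false: the elements $f_1^{-n}\circ f_2\circ f_1^{n}$ (with $f_1$ the translation by $C_{41}$ for the multiplicative-type fibration $\varphi_{D_1}$, acting on $C$ by $x\mapsto 2x$) fix $P$ and have derivative $1$ along $C$ at $P$, hence lie in $G(S,C,P)$, yet they restrict to $C$ as the nontrivial translations $x\mapsto x+2^{-n}$. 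What is true --- and is exactly the point the correct proof exploits --- is only that every element of $G(S,C,P)$ restricts to $C$ as \emph{some} translation $x\mapsto x+a_f$.

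Your Step~3 is a research plan rather than a proof, and it misses the actual mechanism, which is much simpler than assembling a surjection from infinitely many fibrations and local intersection data. The paper's proof considers the restriction homomorphism $\tau: G(S,C,P)\to\Aut(C,P)$, whose image $\Gamma$ is an abelian group of translations of $C$; it then uses just \emph{two} fibrations, one of multiplicative type ($D_1$, giving $f_1$ with $f_1|_C(x)=2x$) and one of additive type ($D_2$, giving $f_2$ with $f_2|_C(x)=x+1$), and the conjugation trick above to show that $\Gamma$ contains a subgroup isomorphic to $\langle 2^{-n} : n>0\rangle\subset\Q$, which is not finitely generated. Proposition~\ref{prop1}(2)--(3) then concludes: if $G(S,C,P)$ were finitely generated, its abelian image $\Gamma$ would be finitely generated, and every subgroup of $\Gamma$ would then be finitely generated, a contradiction. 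Your instinct that one Mordell--Weil group cannot suffice is correct, but the missing idea is precisely this interplay between a multiplicative and an additive translation producing unbounded denominators inside an abelian quotient --- not a count of walls of the ample cone, and not the (different, unproven) assertion that the inertia group $\{f: f|_C=\id_C\}$ is infinitely generated.
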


For the proof, we will need the following two results on elliptic surfaces.

\begin{proposition}\label{prop33}
Let $V$ be a K3 surface and $D$ an effective divisor on $V$ such that $D$ is nef and $(D^2) = 0$. Assume that there is a smooth rational curve $O$ on $V$ such that $(D.O) = 1$. Then there is an elliptic fibration $\varphi_D : V \to \P^1$ such that $D$ is a fiber of $\varphi_D$ and $O$ is a global section of $\varphi_D$. Moreover, this elliptic fibration is necessarily minimal, i.e., no fiber contains a smooth rational curve of self-intersection $-1$.  
\end{proposition}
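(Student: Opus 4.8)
The plan is to derive everything from Riemann--Roch on the K3 surface $V$ together with the Hodge index theorem, using the hypothesis $(D.O)=1$ essentially only to force $D$ to be a \emph{primitive} class. First I would compute, from Riemann--Roch and $K_V=0$, that $\chi(\sO_V(D)) = 2 + \frac{1}{2}(D^2) = 2$. Since $D$ is nef, effective and nonzero, $(-D).H < 0$ for an ample $H$, so $-D$ is not effective and $h^2(\sO_V(D)) = h^0(\sO_V(-D)) = 0$ by Serre duality; hence $h^0(\sO_V(D)) \ge 2$, i.e. $\dim |D| \ge 1$. The hypothesis $(D.O)=1$ shows that $D$ is primitive in ${\rm Pic}(V) = {\rm NS}(V)$: if $D = mD'$ with $m \ge 2$ and $D'$ integral, then $1 = (D.O) = m(D'.O)$, which is impossible.

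The crux is to show that $|D|$ is base-point free. Write $D = M + F$ with $M$ the movable and $F$ the fixed part. Since $D$ is nef and $(D^2)=0$, the nonnegative quantities $(D.M)$ and $(D.F)$ sum to $0$ and therefore both vanish. Expanding $(D.M) = M^2 + (M.F) = 0$ and using $M^2 \ge 0$ (the movable part moves) and $(M.F) \ge 0$ (a general member of $|M|$ meets the effective $F$ nonnegatively), I get $M^2 = (M.F) = 0$, and then $(D.F) = (M.F) + F^2 = 0$ gives $F^2 = 0$. Now by the Hodge index theorem the intersection form on $D^{\perp}$ is negative semidefinite with radical $\C D$, so any isotropic class of $D^{\perp}$ is proportional to $D$; thus $M, F \in \C D$ in ${\rm NS}(V) \otimes \C$. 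Since $M \not\sim 0$ and $D$ is primitive, this forces $M = D$ and $F = 0$. With no fixed part and $(D^2)=0$, two general members of $|D|$ share no component and meet in $(D^2)=0$ points, hence are disjoint, so $|D|$ is base-point free. This Hodge-index-plus-primitivity step is the main obstacle.

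Next I would analyze $\varphi_{|D|} : V \to \P^n$ with $n = \dim|D|$. Because $(D^2)=0$, the image is a curve. Taking the Stein factorization $V \xrightarrow{g} B \to \varphi_{|D|}(V)$, the morphism $g$ has connected fibers onto a smooth curve $B$; since a K3 surface is simply connected, $B \cong \P^1$. A general member of $|D|$ is the preimage of a hyperplane, hence linearly equivalent to $(\deg \varphi_{|D|}(V)) \cdot \Phi$, where $\Phi$ is the class of a fiber of $g$; primitivity of $D$ forces this degree to be $1$, so $D$ is itself the fiber class and $\dim|D|=1$. This yields $\varphi_D := g : V \to \P^1$ with $D$ a fiber. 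By adjunction, $2p_a(D) - 2 = (D^2) + (D.K_V) = 0$, so the general fiber, which is smooth (generic smoothness in characteristic zero) and connected, is a curve of arithmetic genus $1$, i.e. an elliptic curve; thus $\varphi_D$ is an elliptic fibration.

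Finally, $O$ restricts to a morphism $O \cong \P^1 \to \P^1$ of degree $(O.D)=1$, which is therefore an isomorphism, so $O$ is a global section of $\varphi_D$. Minimality is automatic on a K3 surface: if a fiber contained a smooth rational curve $N$ with $(N^2) = -1$, adjunction would give $2p_a(N) - 2 = (N^2) + (N.K_V) = -1$, forcing $p_a(N) = \frac{1}{2}$, which is absurd. Hence $V$ carries no $(-1)$-curve whatsoever, and in particular no fiber of $\varphi_D$ contains one, so the fibration is minimal.
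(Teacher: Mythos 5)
Your proof is correct and follows essentially the same route as the paper's own sketch: Riemann--Roch to get $h^0(\sO_V(D))\ge 2$, the movable/fixed decomposition killed by nefness, $(D^2)=0$ and the Hodge index theorem, then base-point freeness, adjunction for the elliptic fibers, $(D.O)=1$ for the section, and triviality of $K_V$ for minimality. Your only real addition is making precise, via primitivity of $D$, the step the paper dismisses as ``not difficult'' (that the fixed part vanishes), which is a welcome clarification rather than a departure.
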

\noindent
{\it Sketch of the proof.}
This is well-known to experts. We just sketch an elementary proof for the reader's convenience. 

Since  $D$ is non-zero, nef, effective and $(D^2) = 0$, we obtain from Riemann-Roch theorem and Serre duality that
$h^0(V, \sO_V(D)) \ge 2$. 
Let $D'$ be the movable part of $D$. Then $D'$ is nef. As $D$ is nef, we deduce that $(D.D') = 0$ from 
$$0 = (D^2) = (D.D') + (D.(D-D')) \ge 0 + 0 = 0\,\, . $$ 
Then, using the Hodge index theorem for nef divisors $D$ and $D'$, we further deduce that  the classes $(D)$ and $(D')$ are co-linear  in ${\rm NS}\, (V)$. It is not difficult to obtain that $D =D'$ using $(D.O)=1$; and hence 
 $|D|$ has no fixed component. Thus, $|D|$ is free again by $(D^2) = 0$. 
 
 The morphism $\varphi_D$ given by $|D|$ is a fibration having $D$ as a fiber and its smooth fibers are elliptic curves by the adjunction formula. It follows that $\varphi_D$ is an elliptic fibration. It is also relatively minimal because $K_V$ is trivial.  
 Finally, since $(D.O) = 1$ and $|D|$ is free, $O$ cannot be a component of $D$ and hence it is a global section. As $O\simeq \P^1$, $\varphi_D$ defines a fibration over $\P^1$. 
\hfill $\square$

\medskip

Consider a relatively minimal elliptic surface $\varphi : V \to \P^1$ with a global zero section $O$. Denote by $V_t$ the fiber over $t \in \P^1$ and  by $O_t$ the unique point in the intersection $V_t\cap O$.
When  $V_t$ is smooth, it is an elliptic curve and the point $O_t$ is regarded as the origin of  $V_t$. Similarly, if $U$ is an arbitrary global  section of $\varphi$, denote by $U_t$ the unique point in the intersection $V_t\cap U$.
Finally, let $V_{t, 0}$ denote the unique irreducible component of $V_t$ containing $O_t$. We have $V_{t,0}=V_t$ when $V_t$ is smooth.
The following proposition  is a special case of  a more general result due to Kodaira, see \cite[Th.9.1. page 604]{Ko63}.

\begin{proposition}\label{prop32}
The following properties hold.
\begin{enumerate}
\item The set of global sections of $\varphi$ forms an abelian group ${\rm MW}(\varphi)$, called the Mordell-Weil group, 
 that can be identified to a subgroup of  ${\rm Aut}\, (V)$. The action of an element $U \in {\rm MW}(\varphi)$ on $V$ is  induced by the addition 
$$z \mapsto z + U_t$$
on smooth fibers $V_t$. 
\item Let $V_s$ be a singular fiber of additive type, i.e., a singular fiber such that 
$$V_{s, 0} \setminus {\rm Sing}\, (V_s) \simeq \C\,\, .$$
Let $U \in {\rm MW}(\varphi)$ be a section which meets $V_{s,0}$. 
Then $V_{s, 0} \cap {\rm Sing}\, (V_s)$ consists of a single point and the action of $U$ on $V_{s, 0} \setminus {\rm Sing}\, (V_s)$ is given by the addition
$$z \mapsto z + U_s$$ 
with respect to any affine coordinate $z$ of $V_{s, 0} \setminus {\rm Sing}\, (V_s)$ in which
$$O_s = 0\quad \text{and}\quad V_{s, 0} \cap {\rm Sing}\, (V_s) = \infty\,\, .$$ 
Such a coordinate $z$ is unique up to non-zero constant multiple  (as $V_{s, 0}$ is rational). 
\item Let $V_s$ be a  reducible singular fiber of multiplicative type, i.e., a reducible singular fiber such that 
$$V_{s, 0} \setminus {\rm Sing}\, (V_s) \simeq \C^{\times}\,\, .$$  
Let $U \in {\rm MW}(\varphi)$ be a section which meets $V_{s, 0}$. 
Then $V_{s, 0} \cap {\rm Sing}\, (V_s)$ consists of two points and the action of $U$ on $V_{s, 0} \setminus {\rm Sing}\, (V_s)$ is given by the multiplication
$$z \mapsto z \times U_s$$ 
with respect to any affine coordinate $z$ of $V_{s, 0} \setminus {\rm Sing}\, (V_s)$ in which 
$$O_s = 1\quad \text{and} \quad  V_{s, 0} \cap {\rm Sing}\, (V_s) = \{0, \infty\}\,\, .$$ 
Such a coordinate $z$ is unique up to the inversion $z \mapsto z^{-1}$ (as again $V_{s, 0}$ is rational). 
\end{enumerate}
\end{proposition}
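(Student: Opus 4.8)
The plan is to deduce the three assertions from Kodaira's structure theory of relatively minimal elliptic surfaces together with the relative group (Néron) structure attached to $\varphi$; since we may invoke \cite[Th.9.1]{Ko63}, the real task is to match the present special case to the general statement, and I indicate below how each item arises. First I would pass to the generic fiber: $V_\eta$ is a smooth projective curve of genus one over the function field $K:=\C(\P^1)$, and the zero section $O$ provides a $K$-rational point, so $(V_\eta,O)$ is an elliptic curve over $K$. Global sections of $\varphi$ are exactly the $K$-points of $V_\eta$, which form an abelian group under the elliptic addition with $O$ as identity; this is ${\rm MW}(\varphi)$. For $U\in {\rm MW}(\varphi)$ the translation $z\mapsto z+U$ is a $K$-automorphism of $V_\eta$, hence a birational self-map of $V$; because $\varphi$ is relatively minimal (no fiber contains a $(-1)$-curve), this birational map is biregular. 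Restricting to a smooth fiber $V_t$ recovers the fiberwise translation $z\mapsto z+U_t$, since the group law on $V_t$ is the specialization of the one on $V_\eta$. This gives (1).

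For (2) and (3), the key input is that the smooth locus $V^{\rm sm}$ of $\varphi$ (the complement of the singular points of all fibers) carries the structure of a commutative group scheme over $\P^1$ extending the addition on smooth fibers, and that each translation from (1) acts on $V^{\rm sm}$ by this relative group law. Over a point $s$, the smooth locus $V_{s,0}^{\rm sm}$ of the identity component is then a connected commutative algebraic group of dimension one over $\C$, hence isomorphic to $\G_a\simeq \C$ or to $\G_m\simeq \C^\times$, the complete (elliptic) case being excluded because $V_{s,0}^{\rm sm}$ is non-complete. By definition, additive type is exactly the case $V_{s,0}^{\rm sm}\simeq \C$ and multiplicative type the case $V_{s,0}^{\rm sm}\simeq \C^\times$; a section $U$ meeting $V_{s,0}$ specializes to a point $U_s\in V_{s,0}^{\rm sm}$, and the translation by $U$ restricts on $V_{s,0}^{\rm sm}$ to translation by $U_s$ in this group, i.e. to $z\mapsto z+U_s$ in the additive case and to $z\mapsto z\times U_s$ in the multiplicative case.

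The coordinate normalizations then follow from the group structure. An isomorphism $V_{s,0}^{\rm sm}\simeq \G_a$ sends $O_s$ to $0$ and the unique boundary point $V_{s,0}\cap \Sing(V_s)$ to $\infty$, and is unique up to the scalings $z\mapsto cz$ that are the automorphisms of $\G_a$, which gives the stated ambiguity by a non-zero constant multiple. In the reducible multiplicative case the component $V_{s,0}\simeq \P^1$ meets $\Sing(V_s)$ in two distinct points; an isomorphism $V_{s,0}^{\rm sm}\simeq \G_m$ sends $O_s$ to $1$ and these two points to $\{0,\infty\}$, and is unique up to the inversion $z\mapsto z^{-1}$, the only non-trivial automorphism of $\G_m$.

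The main obstacle is the passage from the generic fiber to the singular fibers in (2) and (3): one must know that the group law on smooth fibers extends to a group-scheme structure on $V^{\rm sm}$ and that the automorphism produced in (1) respects it fiber by fiber, so that on a singular fiber it is literally a translation in $\G_a$ or $\G_m$. This is precisely the content of Kodaira's local analysis of the singular fibers and of the associated relative group scheme, which I would cite from \cite{Ko63}; once it is granted, the dichotomy $\G_a$ versus $\G_m$, the positions of $O_s$ and of the singular points, and the uniqueness of the coordinate are routine consequences of the classification of one-dimensional connected commutative algebraic groups over $\C$.
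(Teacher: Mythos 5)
Your proposal is correct and takes essentially the same route as the paper: the paper offers no independent proof, saying only that the proposition is a special case of Kodaira's theorem \cite[Th.9.1]{Ko63}, and that theorem (the relative group structure on the smooth locus of $\varphi$ and the resulting $\G_a$ versus $\G_m$ dichotomy on identity components of singular fibers) is precisely the input your argument defers to. Your bridging steps --- identifying ${\rm MW}(\varphi)$ with the $K$-points of the generic fiber, biregularity of translations via relative minimality, and the coordinate normalizations from the automorphisms of $\G_a$ and $\G_m$ (equivalently, M\"obius transformations of the rational curve $V_{s,0}$ preserving the marked points) --- are sound and simply make explicit what the paper leaves to the citation.
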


\noindent
{\it Proof of Proposition \ref{prop31}}. 
Consider the group representation
$$\tau : G(S, C, P) \to {\rm Aut}(C, P)\,\, ;\,\, f \mapsto f|_C\,\, $$
and set 
$$\Gamma := \tau(G(S, C, P))\,\, .$$ 
Let $f \in G(S, C, P)$. 
Recall that $C = E_1 \simeq \P^1$ and we already fixed the affine coordinate $x$ of $C$ at the beginning of this section  (Definition \ref{def31}).
By definition of $G(S, C, P)$, 
the automorphism $f|_C$ is of the form 
$$f(x) = x + a_f\quad \text{with} \quad a_f \in \C.$$
In particular, $\Gamma$ is abelian as it is isomorphic to a subgroup of $\C$. 
Therefore, by Proposition \ref{prop1}(2)-(3), in order to get that
$G(S, C, P)$ is not finitely generated, it is enough to check that $\Gamma$ admits a non-finitely generated subgroup. 
 
Consider the following two divisors of Kodaira's type on $S$
$$D_1 := C + C_{11} + F_1 + C_{12} + E_2 + C_{22} + F_2 + C_{21}$$
and
$$D_2 := C + 2C_{11} + E_2 + 2C_{12} + E_3 + 2C_{13} + 3F_1\,\, .$$
Using Figure 1, we see that in $D_1$, the curve $C$ intersects the two components $C_{11}$ and $C_{21}$; in $D_2$, the curve $C$ intersects only the component $C_{11}$. It is also easy to check that $(D_1^2)=(D_2^2) = 0$ and $D_1$, $D_2$ are nef, because their intersection number with each irreducible component is $0$. 

Observe moreover that 
$$(D_1.C_{31}) = (D_1.C_{41}) = 1$$
and
$$(D_2.C_{21}) = (D_2.C_{31}) = 1\,\, .$$
Thus, thanks to Proposition \ref{prop33}, we obtain two elliptic fibrations 
$$\varphi_{D_1} : S \to \P^1$$
with $D_1$ as a singular fiber and two global sections $C_{31}$, $C_{41}$ meeting $C$, and 
$$\varphi_{D_2} : S \to \P^1$$
with $D_2$ as a singular fiber and two global sections $C_{21}$, $C_{31}$ meeting $C$. 

We now apply Proposition \ref{prop32}. 
Choose $C_{31}$ as the zero section of $\varphi_{D_1}$ and $C_{21}$ as the zero section of $\varphi_{D_2}$. We see that $D_1$ is a singular fiber of multiplicative type $I_8$ for $\varphi_{D_1}$
and $D_2$ is a singular fiber of additive type $IV^*$ for $\varphi_{D_2}$. The following automorphisms play crucial roles in the proof of Proposition \ref{prop31} and also in Section \ref{sect4}.

\begin{definition}\label{def32}
Let $f_1$ and $f_2$ denote the automorphisms of $S$ given respectively by 
$C_{41} \in {\rm MW}(\varphi_{D_1})$ and $C_{31} \in {\rm MW}(\varphi_{D_2})$. 
\end{definition} 

Note that the affine coordinate $x$ and the coordinate values 
$$C \cap C_{11} = \infty\,\, ,\,\,  C \cap C_{21} = 0\,\, ,\,\, C \cap C_{31} = 1\,\, ,\,\, C \cap C_{41} = 2$$
in Definition \ref{def31} are consistent with the coordinate $z$ in Proposition \ref{prop32} for both $\varphi_{D_1}$ and $\varphi_{D_2}$. 
Both $f_1$ and $f_2$ preserve $C$ and the induced actions on $C$ are given by 
$$f_1|_C(x) = 2x \quad \text{and} \quad f_2|_C(x) = x+1\,\, .$$
In particular, we see that  $f_1, f_2 \in {\rm Aut}\, (S, P)$. 

Observe that 
$$(f_1|_C)^{-n} \circ (f_2|_C) \circ (f_1|_C)^{n}(x) =  x + \frac{1}{2^n}\,\, ,$$
for any positive integer $n$. 
Thus, $f_1^{-n} \circ f_2 \circ f_1^{n} \in G(S, C, P)$ and therefore, 
$$(f_1|_C)^{-n} \circ (f_2|_C) \circ (f_1|_C)^{n} \in \Gamma\,\, .$$ 
Hence the group 
$$\Gamma':=\big\langle (f_1|_C)^{-n} \circ (f_2|_C) \circ (f_1|_C)^{n}\, |\, n \in \Z_{>0} \big\rangle$$ 
is a subgroup of $\Gamma$ and it is isomorphic to the subgroup
$$\Gamma'' := \big\langle \frac{1}{2^n}\, |\, n \in\Z_{>0} \big\rangle$$
of the additive group $\Q$. The group $\Gamma''$ is not finitely generated. Indeed, otherwise, the denominators of its elements must be uniformly bounded. Thus, $\Gamma'$ is not finitely generated as well. This ends the proof of Proposition \ref{prop31}. 
\hfill $\square$

\medskip\noindent
{\it Proof of Theorem \ref{thm31}.} 
This is a consequence of  Lemmas \ref{lem32}, \ref{lem34} and Proposition \ref{prop31}. 
\hfill $\square$

\section{A variety with infinitely many real forms}\label{sect4}

In this section, we will specify some parameters in our construction of a very special triple in order to get a variety $V$ satisfying the property (3) in Theorem \ref{thm1}, see the properties (S1)-(S5) below.
We continue to use the same notation as in the previous sections and need to study some involutions of $S$ as in the work by Lesieutre \cite{Le17}. 

Consider an involution $f$ in ${\rm Aut} \, (S)$. Since $H^0(S,\Omega_S^2)=\C\omega_S$, the form $f^*(\omega_S)$ is equal to a constant times $\omega_S$. This constant should be 1 or $-1$ because $f$ is an involution.

\begin{lemma} \label{l:inv}
Let $f$ be any involution of $S$ such that $f^*\omega_S=-\omega_S$. Then the differential of $f$ at any fixed point in $S$ has exactly one eigenvalue $1$ and one eigenvalue $-1$. Moreover, if the fixed locus of $f$ in $S$ is non-empty, then it is a smooth curve.
\end{lemma}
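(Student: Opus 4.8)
The plan is to work pointwise at a fixed point and then invoke a linearization argument for the global smoothness statement. Fix a point $P \in S$ with $f(P) = P$ and consider the differential $df_P \in {\rm GL}(T_{S,P})$. Since $f$ is an involution, $(df_P)^2 = \id_{T_{S,P}}$, so $df_P$ is diagonalizable and its two eigenvalues $\lambda_1, \lambda_2$ lie in $\{1,-1\}$. Thus a priori the pair $(\lambda_1,\lambda_2)$ is one of $(1,1)$, $(1,-1)$ or $(-1,-1)$, and it remains to rule out the first and third possibilities using the hypothesis $f^*\omega_S = -\omega_S$.

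The key computation is to compare $\det(df_P)$ with the canonical eigenvalue $-1$. Choosing holomorphic coordinates $(z_1,z_2)$ centred at $P$ that diagonalize $df_P$, so that $\omega_S(P) = c\, dz_1 \wedge dz_2$ with $c \in \C^\times$ (here I use that $\omega_S$ is nowhere degenerate), I have $(f^*dz_i)(P) = \lambda_i\, dz_i$ and hence $f^*(dz_1 \wedge dz_2)(P) = \lambda_1\lambda_2\, dz_1 \wedge dz_2$, so that $(f^*\omega_S)(P) = \lambda_1 \lambda_2\, \omega_S(P)$. Since $f^*\omega_S = -\omega_S$, I conclude $\lambda_1\lambda_2 = \det(df_P) = -1$. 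Combined with $\lambda_i \in \{1,-1\}$, this forces $\{\lambda_1,\lambda_2\} = \{1,-1\}$, which is exactly the first assertion.

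For the second assertion, I would invoke the linearization of finite-order holomorphic maps near a fixed point: since $\langle f \rangle$ is a finite (cyclic of order $2$) group, by averaging a local chart over $\langle f \rangle$ one finds holomorphic coordinates near $P$ in which $f$ agrees with its linear part ${\rm diag}(1,-1)$. In such coordinates the fixed locus of $f$ is precisely the $(+1)$-eigenline $\{z_2 = 0\}$, a smooth analytic curve through $P$. By the first assertion the eigenvalue $1$ occurs with multiplicity exactly one at every fixed point, so this local description is valid at each point of the fixed locus; hence the fixed locus is everywhere a smooth $1$-dimensional submanifold, and being closed in the projective surface $S$ it is a smooth (algebraic) curve.

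The elementary part is the eigenvalue computation; the only substantial input is the local linearization theorem for $\langle f \rangle$, which is what guarantees that the fixed set is smooth of the expected dimension $1$ rather than merely the zero set of the rank-one map $z \mapsto f(z)-z$. I expect this linearization step to be the main point to state carefully, although it is entirely classical.
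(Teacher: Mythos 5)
Your proposal is correct and follows essentially the same route as the paper: the paper likewise linearizes the involution near each fixed point (citing the averaging argument from Katsura's Lemma 1.3), observes the diagonal entries are $\pm 1$, and uses $f^*\omega_S=-\omega_S$ to force one eigenvalue $1$ and one eigenvalue $-1$, from which smoothness of the fixed locus follows. Your only (harmless) variation is ordering: you derive the eigenvalue dichotomy directly from $(df_P)^2=\id$ before invoking linearization, whereas the paper invokes linearization at the outset.
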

\proof
Since $f$ is an involution, it is known that $f$
 is linearizable near each fixed point, see the proof of Lemma 1.3 in \cite{Ka84}. Moreover, for the same reason, the linear form of $f$ is given by a diagonal matrix whose diagonal entries are equal to 1 or $-1$.  Since  $f^*\omega_S = -\omega_S$, there are exactly one diagonal entry equal to 1 and one diagonal entry equal to $-1$. The lemma follows easily. 
\endproof

Let ${\rm Inv}^- (S,P)$ denote the set of all involutions $f$ in ${\rm Aut}\, (S,P)$ such that $f^*\omega_S=-\omega_S$.
We have seen that  ${\rm Aut}\, (S,P)$ is contained in the three groups ${\rm Aut}\, (S_1,E_P)$, ${\rm Aut}\, (S_1,P')$ and ${\rm Aut}\, (S_1,P'')$  (Definitions \ref{def22} and \ref{def23}).   Observe also that  since $E_P \simeq \P^1$, there is a unique involution of $E_P$, not equal to the identity, which fixes the points $P'$ and $P''$. Denote this involution by $\iota_{E_P}$.  We deduce from the uniqueness of $\iota_{E_P}$ and the first assertion in Lemma \ref{l:inv} that the restriction of $f$ to $E_P$ coincides with $\iota_{E_P}$ for every $f$ in ${\rm Inv}^- (S,P)$.

Recall that
$$D_2 := C + 2C_{11} + E_2 + 2C_{12} + E_3 + 2C_{13} + 3F_1\,\, $$
is a singular fiber of $\varphi_{D_2}$.
Consider the divisor
$$D_2' := F_2 + 2C_{24} + F_3 + 2C_{34} + F_4 + 2C_{44} + 3E_4\, .$$ 
Using Figure 1, it is easy to check that $(D_2'^2)=0$ and 
 $(D_2'.R) = 0$ for each irreducible component $R$ of $D_2$ or $D_2'$. 
 We deduce that $D_2'$ is nef and we have
 $$(D_2^2)=(D_2'^2)=(D_2.D_2')=0\, .$$ 

 By Hodge index theorem, $(D_2')$ is proportional to $(D_2)$. Since $(D_2.C_{31})=(D_2'.C_{31})=1$, we obtain that  $|D_2|=|D_2'|$ and hence $D_2'$ is also a singular fiber of $\varphi_{D_2}$. 
According to Table 1 and Table A in \cite{Og89}, the
other singular fibers of $\varphi_{D_2}$ are exactly $a$ rational curves with an ordinary node and $b$ rational curves with an ordinary cusp, where $a$ and $b$ are some integers with $a + 2b = 8$. 

Let $\iota \in {\rm Aut}\, (S)$ be the inversion of the elliptic fibration $\varphi_{D_2} : S \to \P^1$ with respect to the section $C_{31}$.  The fibers of $\varphi_{D_2}$ are invariant by this map. Let $\Sigma$ be the fixed locus of $\iota$.
We will need the following lemmas.

\begin{lemma}\label{l:Sigma} 
The map $\iota$ belongs to ${\rm Inv}^- (S,P)$ and  its restriction  to $E_P$ is equal to $\iota_{E_P}$. Moreover, $\Sigma$ is the disjoint union of the smooth rational curves $C_{31}, C_{11}, C_{34}$ together with a smooth curve $\Sigma_0$ which contains no component of any  fiber of  $\varphi_{D_2}$.
\end{lemma}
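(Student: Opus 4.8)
The plan is to verify the three assertions in order: that $\iota\in{\rm Inv}^-(S,P)$, that $\iota|_{E_P}=\iota_{E_P}$, and then the precise shape of the fixed locus $\Sigma$. First I would check that $\iota\in{\rm Inv}^-(S,P)$. By construction $\iota$ is the fibrewise inversion $z\mapsto -z$ of $\varphi_{D_2}$ taken with $C_{31}$ as zero section, so it is an automorphism of order two fixing $C_{31}$ pointwise. To see $\iota^*\omega_S=-\omega_S$, I would work in a local trivialization near a smooth fibre in which $\omega_S=dt\wedge dz$, with $t$ a coordinate on the base and $z$ the additive fibre parameter vanishing along $C_{31}$; then $\iota^*(dt\wedge dz)=-dt\wedge dz$. (Equivalently, a nontrivial involution of a K3 surface that fixes a curve cannot be symplectic, so it is anti-symplectic.) It remains to see $\iota(P)=P$: recall from Proposition \ref{prop32} and the setup of $\varphi_{D_2}$ that $C=E_1$ is the identity component $V_{s,0}$ of the additive fibre $D_2$ met by $C_{31}$, and that $P=C\cap C_{11}$ is exactly the point $V_{s,0}\cap\Sing(D_2)$, which inversion fixes. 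Hence $\iota\in{\rm Inv}^-(S,P)$, and then $\iota|_{E_P}=\iota_{E_P}$ is immediate from the general observation made just before the lemma that every element of ${\rm Inv}^-(S,P)$ restricts to the unique involution $\iota_{E_P}$ of $E_P$ fixing $P'$ and $P''$.

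Next I would set up the fixed locus. By Lemma \ref{l:inv}, since $\iota^*\omega_S=-\omega_S$ and the fixed locus is nonempty (it contains $C_{31}$), the locus $\Sigma$ is a smooth curve; in particular its irreducible components are pairwise disjoint, which supplies the asserted disjointness of $C_{31}$, $C_{11}$, $C_{34}$, $\Sigma_0$ for free. The section $C_{31}$ is one component, and I would separate the remaining components into those contained in fibres of $\varphi_{D_2}$ and those dominating $\P^1$; the goal is to show the only fibre components of $\Sigma$ are $C_{11}$ and $C_{34}$, and to collect the horizontal part into $\Sigma_0$.

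To identify the fibre components I would use that $\iota$ preserves every fibre and acts on the component group $\Z/3\Z$ of the fibres $D_2$ and $D_2'$ by negation, hence fixes the arm carrying the identity component and interchanges the other two arms. For $D_2=C+2C_{11}+E_2+2C_{12}+E_3+2C_{13}+3F_1$ the identity arm is $C$–$C_{11}$, so $\iota$ fixes $C$ and $C_{11}$ as sets; on $C$ it acts by $z\mapsto -z$ with $P=\infty$, giving $d(\iota|_C)_P=-1$. By Lemma \ref{l:inv} the eigenvalues of $d\iota_P$ are $+1$ and $-1$, so the $+1$–eigenline is $T_{C_{11},P}$. Since $\iota|_{C_{11}}$ is an involution of $\P^1$ with derivative $+1$ at the fixed point $P$, it cannot be the nontrivial involution (whose derivative at each fixed point is $-1$), hence $\iota|_{C_{11}}=\id$ and $C_{11}\subset\Sigma$. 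Because $C_{31}$ meets $D_2'$ along its multiplicity-one component $F_3$, the identity arm of $D_2'=F_2+2C_{24}+F_3+2C_{34}+F_4+2C_{44}+3E_4$ is $F_3$–$C_{34}$, and the same eigenvalue argument at the corresponding node gives $C_{34}\subset\Sigma$. No other fibre component is fixed pointwise: the central components $F_1,E_4$ interchange their two non-identity attaching points, the identity components $C,F_3$ carry the nontrivial inversion $z\mapsto -z$ on their additive part, the components on the two interchanged arms are swapped with one another, and each irreducible nodal or cuspidal fibre meets $\Sigma$ in only finitely many fixed points. Setting $\Sigma_0$ to be the union of the remaining components of $\Sigma$, every component of $\Sigma_0$ dominates $\P^1$, so $\Sigma_0$ contains no fibre component, and $\Sigma=C_{31}\sqcup C_{11}\sqcup C_{34}\sqcup\Sigma_0$.

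The hard part will be this last step: correctly reading off the action of $\iota$ on the reducible additive fibres $D_2$ and $D_2'$—in particular using the eigenvalue computation at the node to pin down the single pointwise-fixed component on each identity arm—and then ruling out pointwise-fixed components on all the remaining (central, interchanged, nodal, and cuspidal) fibre pieces.
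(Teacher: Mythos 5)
Your proposal is correct and follows essentially the same route as the paper's proof: anti-symplecticity of $\iota$ read off from its differential along the section $C_{31}$, Lemma \ref{l:inv} to get that $\Sigma$ has smooth pairwise disjoint components, the identification of $P$ as the unique singular point of $D_2$ on the invariant identity component $C$, and the eigenvalue argument at the nodes to conclude that $C_{11}$ and $C_{34}$ are pointwise fixed while everything else is not. The only (minor) variation is how you rule out the remaining components of $D_2$ and $D_2'$: you invoke the standard Kodaira--N\'eron fact that the inversion acts by negation on the component group $\Z/3\Z$, whereas the paper derives the swap of the two non-identity arms directly by counting fixed points of $\iota|_{F_1}$ among the three nodes lying on the central component $F_1$; both arguments are valid and yield the same permutation of components.
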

\begin{proof}
By definition, $\iota$ is an involution of $S$ and $C_{31}$ is a component of $\Sigma$. Let $F_t$ denote the fiber of $\varphi_{D_2}$ over a point $t\in\P^1$. It meets $C_{31}$ at a unique point counting multiplicity because $C_{31}$ is a section of $\varphi_{D_2}$.
For any smooth fiber $F_t$, the differential of $\iota$ at the point $F_t\cap C_{31}$ 
is 1 in the direction of $C_{31}$ and $-1$ in the direction of $F_t$. It follows that 
$f^*\omega_S=-\omega_S$ and hence, 
by Lemma \ref{l:inv}, the components of $\Sigma$ are smooth and disjoint. 
In particular, if  a curve meets $C_{31}$, it cannot be a component of $\Sigma$ unless this is the curve $C_{31}$ itself. 

Since $D_2$ is invariant by $\iota$,  i.e., $\iota(D_2) = D_2$, and $C$ is its unique component meeting $C_{31}$,  it follows that $C$ is also invariant by $\iota$, i.e., $\iota(C)= C$.
Moreover, it follows from the above discussion that $\iota$ is not the identity on $C$. Since $P$ is the unique singular point of  $D_2$ in $C$, it is fixed by $\iota$. We conclude that $\iota$ belongs to ${\rm Inv}^- (S,P)$ and therefore its restriction  to $E_P$ is equal to $\iota_{E_P}$.
Observe also that any fiber $F_t$, different from $D_2$ and $D_2'$, is irreducible and meets the section $C_{31}$. So it cannot be a component of $\Sigma$. In order to complete the proof of the lemma, it is enough to determine the intersections of $\Sigma$ with $D_2$ and $D_2'$. 

Since the restriction of $\iota$ to $C$ is an involution which is not the identity, the derivative of $\iota|_C$ is $-1$ at each fixed point. In particular,  the derivative of $\iota|_C$ at $P$ is $-1$. Since $D_2$ is invariant by $\iota$, we deduce that $C_{11}$ is invariant by $\iota$ and therefore by Lemma \ref{l:inv}, the derivative of $\iota|_{C_{11}}$ at $P$ is 1. It follows that $\iota|_{C_{11}}$, which is an involution of a rational curve, should be identity. In other words, $C_{11}$ is a component of $\Sigma$. 

Using again the singularity $F_1\cap C_{11}$ of $D_2$ and derivatives of $\iota$ at this point, we obtain that  the restriction of $\iota$ to $F_1$ is not the identity and has exactly two fixed points including $F_1\cap C_{11}$. Observe now that there are exactly three singular points of $D_2$ on $F_1$; they are the intersections of $F_1$ with $C_{11}, C_{12}$ and $C_{13}$. This set of three points should be invariant by $\iota$. The only possibility here is that  $\iota$ permutes $F_1\cap C_{12}$ and $F_1\cap C_{13}$. Thus, it also permutes the pair $C_{12}, C_{13}$ and the pair $E_2,E_3$. 

We conclude that $C_{11}$ is the unique component of $D_2$ which is contained in $\Sigma$. Similarly, $C_{34}$ is the unique component of $D_2'$ which  belongs to $\Sigma$. To complete the proof of the lemma, it is enough to define $\Sigma_0$ as the complement in $\Sigma$ of $C_{31}\cup C_{11}\cup C_{34}$. 
\end{proof}

\begin{lemma}\label{lem40} 
The curve $\Sigma_0$ is irreducible and its genus is equal to $4$.  
\end{lemma}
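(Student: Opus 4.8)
The plan is to realise $\Sigma_0$ as a trisection of the elliptic fibration $\varphi_{D_2}\colon S\to\P^1$ and to compute its genus by Riemann--Hurwitz. First I would observe that on a smooth fiber $F_t$ the involution $\iota$, being the inversion with respect to the zero section $C_{31}$, acts as $z\mapsto -z$ with origin $O_t=F_t\cap C_{31}$; its fixed points on $F_t$ are therefore exactly the four $2$-torsion points of the elliptic curve $F_t$. One of them, $O_t$, lies on $C_{31}$, and the vertical fixed curves $C_{11},C_{34}$ meet no smooth fiber, so by Lemma \ref{l:Sigma} the set $\Sigma_0\cap F_t$ consists precisely of the three non-zero $2$-torsion points of $F_t$. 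Thus $\Sigma_0$ is the closure of the locus of non-zero $2$-torsion and $\varphi_{D_2}|_{\Sigma_0}\colon\Sigma_0\to\P^1$ is a degree $3$ cover. Over the open set $U\subset\P^1$ parametrising smooth fibers this cover is étale (the $2$-torsion points are distinct in characteristic $0$), with monodromy given by the reduction mod $2$ of the monodromy of $\varphi_{D_2}$, i.e. a homomorphism $\pi_1(U)\to{\rm GL}(2,\F_2)$ whose action on the three non-zero elements of $F_t[2]\simeq(\Z/2)^2$ realises ${\rm GL}(2,\F_2)$ as the symmetric group on three letters.

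For irreducibility I would use that the local monodromy of $\varphi_{D_2}$ around each of the fibers $D_2,D_2'$ of type $IV^*$ has order $3$, hence reduces mod $2$ to a $3$-cycle on the non-zero $2$-torsion. A single $3$-cycle already acts transitively on the three sheets, so the étale cover $\Sigma_0|_U\to U$ is connected; since $\Sigma_0$ is smooth by Lemma \ref{l:Sigma}, it is irreducible.

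For the genus I would apply Riemann--Hurwitz to the degree $3$ map $\Sigma_0\to\P^1$. As $\Sigma_0$ is smooth it is the normalization of the trisection, so over each $t_0\in\P^1\setminus U$ the points of $\Sigma_0$ correspond to the orbits of the local monodromy permutation, and the ramification contribution equals $3$ minus the number of its cycles. Reading the local monodromies mod $2$, they are: a transposition at each of the $a$ nodal fibers of type $I_1$ (contribution $1$), a $3$-cycle at each of the $b$ cuspidal fibers of type $II$ (contribution $2$, since the order $6$ monodromy reduces to order $3$), and a $3$-cycle at each of the two fibers $D_2,D_2'$ of type $IV^*$ (contribution $2$). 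Hence the total ramification is $R=a+2b+2\cdot 2=(a+2b)+4=12$, using $a+2b=8$. Riemann--Hurwitz then gives $2g(\Sigma_0)-2=3(-2)+12=6$, so $g(\Sigma_0)=4$.

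The main obstacle is the local analysis at the additive fibers: one must be certain that the closure $\Sigma_0$ meets each of $D_2,D_2'$ (and each cuspidal fiber) in a single point where the three sheets come together, with no extra tangency or singularity. This is forced by the smoothness of $\Sigma_0$ together with the identification of the ramification with the mod $2$ local monodromy cycle types, so that the scheme-theoretic fiber of $\Sigma_0\to\P^1$ over such $t_0$ is exactly the monodromy orbit. As a consistency check one can verify $\chi_{\mathrm{top}}(\Sigma)=\chi(C_{31})+\chi(C_{11})+\chi(C_{34})+\chi(\Sigma_0)=2+2+2+(2-2g)=0$ and compare with the Lefschetz number of $\iota$, or equivalently compute $(\Sigma_0^2)=6$ and invoke adjunction $2g(\Sigma_0)-2=(\Sigma_0^2)$ on the K3 surface $S$.
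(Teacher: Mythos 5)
Your proof is correct, but it takes a genuinely different route from the paper's. The paper never mentions monodromy: it determines the intersection of $\Sigma_0$ with each singular fiber by direct fixed-point geometry. For $D_2$ and $D_2'$ it uses the way $\iota$ permutes the components (e.g.\ $\iota$ swaps $C_{12},C_{13}$ and $E_2,E_3$) to show that $\Sigma_0$ meets each of them transversally in a single point, lying on $F_1$ resp.\ $E_4$; for a cuspidal fiber it counts the two fixed points of the induced involution, giving one point of multiplicity $3$; for a nodal fiber it lifts $\iota$ to the normalization $\pi:\P^1\to F_t$ and shows the second fixed point of the lift cannot map to the node, giving the node with multiplicity $2$ plus one further point. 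Irreducibility is then deduced from the fact that every component of $\Sigma_0$ dominates $\P^1$, hence meets $D_2$ and $D_2'$, while $\Sigma_0\cap D_2$ is a single transversal point. You instead read off both irreducibility and the ramification from the mod-$2$ reduction of Kodaira's local monodromies ($I_1\mapsto$ transposition, $II$ and $IV^*\mapsto$ $3$-cycles), using that a finite map from a smooth projective curve which is \'etale over the smooth-fiber locus has fibers over the branch points given by the orbits of the local monodromy. The two methods produce identical ramification data ($a\cdot 1+b\cdot 2+2\cdot 2=12$, matching the paper's count $2+2+2b+a$) and the same Hurwitz computation. Your approach is more systematic and generalizes at once to the $2$-torsion trisection of any Jacobian elliptic fibration, at the cost of invoking Kodaira's monodromy matrices and the covering-space dictionary; one small step you should make explicit is why an order-$3$ local monodromy cannot reduce to the identity modulo $2$ (for instance because the principal congruence subgroup $\Gamma(2)\subset\mathrm{SL}(2,\Z)$ has no $3$-torsion, or simply by writing down the $IV^*$ matrix $\begin{pmatrix}-1&-1\\ 1&0\end{pmatrix}$ and reducing it). The paper's argument is longer but self-contained and purely geometric, requiring only the classification of the singular fibers from \cite{Og89}.
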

\begin{proof}
By definition of $\iota$, if $F_t$ is a smooth fiber of $\varphi_{D_2}$, then $F_t\cap \Sigma$ is exactly the set of 2-torsion points. Since one of these points is $C_{31}\cap F_t$, the intersection $F_t\cap\Sigma_0$ is exactly the set of three other points.
Therefore,  the map
$$\varphi := \varphi_{D_2}|_{\Sigma_0} : \Sigma_0 \to \P^1\,\, $$
is of degree $3$ and unramified over $t$ when $F_t$ is a smooth fiber. We also deduce that $\Sigma_0\cap F_t$ contains three points counting multiplicity for each singular fiber $F_t$.

The description of the action of $\iota$ on $D_2$ in the proof of Lemma \ref{l:Sigma} shows that $\Sigma_0\cap D_2$ contains only one point which belongs to $F_1$. 
So this intersection is transversal since the multiplicity of $F_1$ in $D$ is equal to 3.
Similarly, $\Sigma_0\cap D_2'$ contains only one point of multiplicity 3. This point belongs to $E_4$ and the intersection $\Sigma_0\cap D_2'$  is transversal. The restriction of $\varphi$ to each component of $\Sigma_0$ is a non-constant map. 
Therefore, each component of $\Sigma_0$ meets $D_2$ and $D_2'$. It follows that $\Sigma_0$ is irreducible as it intersects $D_2$ and $D_2'$ transversally at only one point.

Consider a singular fiber $F_t$ which is a rational curve with an ordinary cusp. The restriction of $\iota$ to $F_t$ is an involution different from the identity. So it admits exactly two fixed points including $C_{31}\cap F_t$. In other words, $\Sigma_0\cap F_t$ contains exactly 1 point with multiplicity 3.

Consider now a singular fiber $F_t$ which is a rational curve with an ordinary node. Denote by $A$ the unique singular point of $F_t$. It should be fixed by $\iota$ and hence $A$ belongs to $\Sigma$. 
It follows that $A$ belongs to $\Sigma_0$ since $C_{31}$ intersects $F_t$ transversally at a unique point as it is a section of $\varphi_{D_2}$.
Moreover, since $F_t\cap \Sigma_0$ contains exactly 3 points counting multiplicity, we deduce that $F_t\cap \Sigma_0$ contains at most one point different from $A$.

The restriction of $\iota$ to $F_t$ is an involution different from the identity. Let $\pi: \P^1\to F_t$ be the normalization of $F_t$. The map $\iota|_{F_t}$ can be lifted by $\pi$ to an involution $\iota'$ of $\P^1$ which is not the identity.
So $\iota'$ admits exactly two fixed points that we denote by $a_1$ and $a_2$. The two points $\pi(a_1)$ and $\pi(a_2)$ belong to $F_t\cap \Sigma$ and  one of them should be $F_t\cap C_{31}$. 
We can assume that $\pi(a_1)=F_t\cap C_{31}$. 
We show that $\pi(a_2)\not=A$. Observe that the set $\pi^{-1}(A)$ is invariant by $\iota'$ and this set contains 2 points different from $a_1$. 
If  $\pi(a_2)=A$, then $a_2$ is one of those points and the other one should be fixed by $\iota'$ since $a_2$ is already fixed by $\iota'$. This contradicts the fact that $\iota'$ has only 2 fixed points. So $\pi(a_2)\not=A$ and we conclude that $F_t\cap \Sigma_0$ contains exactly two points : the point $A$ of multiplicity 2 and the point $\pi(a_2)$ of multiplicity 1.

It remains to compute the genus $g(\Sigma_0)$ of $\Sigma_0$. According to the above discussion, the map $\varphi$  is of degree $3$ and unramified over $t$ when $F_t$ is a smooth fiber. Moreover, $\varphi^{-1}(t)$ consists of $1$ point of multiplicity 3 if the fiber $F_t$ is $D_2$, $D_2'$ or a rational curve with an ordinary cusp. It consists of two points of multiplicities 1 and 2 if the fiber $F_t$ is a rational curve with an ordinary node. 
Recall that there are here  $b$ rational curves with an ordinary cusp and $a$ rational curves with an ordinary node.
Then, by Hurwitz's formula, we have
$$\chi(\Sigma_0) = 3\chi(\P^1) - 2-2 -2b -a$$
or equivalently
$$2 - 2g(\Sigma_0) = 6 -2 -2 -2b  -a\,\, .$$
Hence  $g(\Sigma_0) =4 $ because $a + 2b = 8$. This completes the proof.
\end{proof}

\begin{lemma}\label{lem41}
The centralizer of $\iota$  in ${\rm Aut}\, (S)$, denoted by  ${\rm C}_{\Aut(S)}(\iota)$, is a finite group.
\end{lemma}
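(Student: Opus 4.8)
The plan is to show that every element of $Z:={\rm C}_{\Aut(S)}(\iota)$ preserves the genus-$4$ curve $\Sigma_0$, to restrict to $\Sigma_0$, and then to control both the image and the kernel of this restriction. First I would note that since $g\iota=\iota g$ for every $g\in Z$, the map $g$ preserves the fixed locus ${\rm Fix}(\iota)=\Sigma$: if $\iota(x)=x$ then $\iota(g(x))=g(\iota(x))=g(x)$. By Lemmas \ref{l:Sigma} and \ref{lem40}, $\Sigma$ is the disjoint union of the rational curves $C_{31},C_{11},C_{34}$ and the irreducible curve $\Sigma_0$ of genus $4$. As $g$ permutes the irreducible components of $\Sigma$ while preserving their genus, it must fix the unique one of positive genus, i.e. $g(\Sigma_0)=\Sigma_0$. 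Restriction thus defines a group homomorphism $r\colon Z\to\Aut(\Sigma_0)$, $g\mapsto g|_{\Sigma_0}$. Since $\Sigma_0$ is a smooth curve of genus $4\ge 2$, the group $\Aut(\Sigma_0)$ is finite, so $r(Z)$ is finite and it suffices to prove that $K:=\ker r$, the subgroup of $Z$ fixing $\Sigma_0$ pointwise, is finite.

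For the kernel I would argue via the differential along $\Sigma_0$. Let $g\in K$ and $x\in\Sigma_0$. Then $dg_x$ fixes the tangent line $T_{\Sigma_0,x}$ and so has eigenvalue $1$ in that direction, while the induced action on the normal bundle $N_{\Sigma_0/S}$ is a bundle automorphism covering the identity, hence multiplication by a nowhere-vanishing holomorphic function on the projective connected curve $\Sigma_0$, i.e. by a constant $\beta(g)\in\C^\times$. Computing $\det dg_x$ in a basis adapted to $T_{\Sigma_0,x}$ gives $\alpha(g)=\beta(g)$, where $g^*\omega_S=\alpha(g)\omega_S$ as in Lemma \ref{lem21}. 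Since the canonical representation has finite image by \cite[Th.14.10]{Ue75}, the homomorphism $\beta=\rho|_K\colon K\to\C^\times$ has finite image, and $K$ will be finite once I show that $\ker\beta$ is finite.

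The main obstacle is this last point: an element $g\in\ker\beta$ is symplectic ($\alpha(g)=1$) and fixes the curve $\Sigma_0$ pointwise, and I must show such a $g$ equals $\id$. The crux is that a non-trivial symplectic automorphism of a K3 surface cannot contain a curve in its fixed locus. For $g$ of finite order this is Nikulin's classical result that symplectic automorphisms have only isolated fixed points. To reduce to finite order I would use the injectivity of $\Aut(S)\to{\rm O}(H^2(S,\Z))$ coming from the global Torelli theorem together with the positivity $(\Sigma_0^2)=2g(\Sigma_0)-2=6>0$: a positive-entropy automorphism has only finitely many fixed points by the Lefschetz fixed-point formula, and so cannot fix $\Sigma_0$ pointwise, while an infinite-order zero-entropy automorphism acts on ${\rm NS}(S)$ parabolically, preserving a unique isotropic nef class $[F]$; any $g^*$-invariant class then lies in $[F]^\perp$, so $g^*[\Sigma_0]=[\Sigma_0]$ would force $([\Sigma_0].[F])=0$, which is impossible since $\Sigma_0$ is horizontal (it dominates $\P^1$ under $\varphi_{D_2}$, as in the proof of Lemma \ref{lem40}). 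Hence $g^*$ has finite order, so $g$ has finite order, and Nikulin's result gives $g=\id$. Therefore $\ker\beta=\{\id\}$, the group $K$ embeds into the finite group $\beta(K)$, and $Z$ is an extension of the finite group $r(Z)$ by the finite group $K$, hence finite.
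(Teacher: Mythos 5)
Your first step coincides with the paper's: every $g$ in the centralizer preserves ${\rm Fix}(\iota)$, hence preserves the unique positive-genus component $\Sigma_0$ of that fixed locus (Lemmas \ref{l:Sigma} and \ref{lem40}). From there the paper's proof is a short lattice-theoretic argument: since $(\Sigma_0^2)=2g(\Sigma_0)-2=6>0$, the Hodge index theorem makes the orthogonal complement $(\Sigma_0)^{\perp}$ in ${\rm NS}\,(S)$ negative definite, so its orthogonal group is finite; hence the centralizer acts on ${\rm NS}\,(S)$ through a finite group, and the global Torelli theorem together with the finiteness of the canonical representation (as in Lemma \ref{lem31}) concludes. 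Your route is genuinely different: you restrict to $\Sigma_0$ and use finiteness of $\Aut(\Sigma_0)$ for a genus-$4$ curve, then control the kernel by the normal-bundle character $\beta=\alpha$, reducing everything to the claim that a nontrivial symplectic automorphism cannot fix a curve pointwise. The skeleton of this (restriction/kernel, $\beta=\alpha$, Nikulin for the finite-order case) is sound and geometrically appealing.

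However, your reduction to finite order — the crux of the kernel analysis — has real gaps. First, ``a positive-entropy automorphism has only finitely many fixed points by the Lefschetz fixed-point formula'' is not a valid deduction: Lefschetz gives existence of fixed points, not finiteness, and in fact positive-entropy automorphisms of surfaces \emph{can} fix curves pointwise (such curves necessarily have negative self-intersection), so the statement is false as written. What is true, and what you need, is that a hyperbolic isometry $g^*$ admits no invariant class of positive square: an invariant class is orthogonal to the expanding and contracting eigenvectors $v^{\pm}$, hence lies in a negative definite subspace, contradicting $([\Sigma_0]^2)=6$. Second, in the parabolic case the isotropic nef class $[F]$ is the fiber class of the $g$-invariant fibration, which need not be $\varphi_{D_2}$, so horizontality of $\Sigma_0$ with respect to $\varphi_{D_2}$ is beside the point; the needed fact $([\Sigma_0]\cdot[F])\neq 0$ holds anyway by Hodge index, since otherwise the nonzero isotropic class $[F]$ would lie in the negative definite lattice $[\Sigma_0]^{\perp}$. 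Third, finite order of $g^*$ on ${\rm NS}\,(S)$ gives finite order of $g$ in $\Aut(S)$ only after controlling the action on the transcendental lattice; for symplectic $g$ that action is trivial (a standard fact for projective K3 surfaces), but this must be said before invoking Torelli and Nikulin. Note that all three repairs are instances of a single observation — an automorphism whose action on ${\rm NS}\,(S)$ fixes a class of positive square has finite order there, because it preserves the negative definite complement — and that observation, applied to the whole centralizer at once, \emph{is} the paper's proof; it renders the restriction to $\Sigma_0$, the character $\beta$, the entropy trichotomy and Nikulin's theorem unnecessary.
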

\begin{proof} 
Let $f$ be an element of ${\rm C}_{\Aut(S)}(\iota)$. We have $f \circ \iota = \iota \circ f$. This identity implies that the fixed 
locus of $\iota$ is invariant by $f$.  Therefore, by Lemma \ref{lem40}, we have $f^*(\Sigma_0) = \Sigma_0$. It follows that ${\rm C}_{\Aut(S)}(\iota)$ preserves both the class $(\Sigma_0)$ in ${\rm NS}\, (S)$ and the orthogonal lattice $(\Sigma_0)_{{\rm NS}(S)}^{\perp}$  of $(\Sigma_0)$ 
in ${\rm NS}(S)$.

By the adjunction formula and using that $K_S=0$, we have 
$$(\Sigma_0^2) = (K_{\Sigma_0}) - (K_S.\Sigma_0) = {\rm deg}\, K_{\Sigma_0} = 2g(\Sigma_0) - 2=6\, .$$
It follows that $\Sigma_0$ is nef and big. 
By Hodge index theorem,  the lattice $(\Sigma_0)_{{\rm NS}(S)}^{\perp}$ is negative definite. In particular, the automorphism group ${\rm O}\,(\Sigma_0)_{{\rm NS}(S)}^{\perp})$ of the lattice $(\Sigma_0)_{{\rm NS}(S)}^{\perp}$ is finite. 
Therefore, the action of ${\rm C}_{\Aut(S)}(\iota)$
on $(\Sigma_0)_{{\rm NS}(S)}^{\perp}$ (and hence on ${\rm NS}\, (S)$) is finite. Finally, as in the proof of Lemma \ref{lem31}, using 
the global Torelli theorem and the finiteness of the canonical representation, we obtain that ${\rm C}_{\Aut(S)}(\iota)$ is finite.
\end{proof}

In what follows, we assume the following property.

\medskip\noindent
{\bf (S1)}  In Definition \ref{def23}, we choose $Q_1,\ldots, Q_k$ so that the set $\{Q_1,\ldots,Q_k\}$ is invariant by the involution $\iota_{E_P}$ of $E_P$. Necessarily, the number $k$ should be even  with $k \ge 2$.

\medskip

Since the maps in ${\rm Inv}^-(S,P)$ are equal to $\iota_{E_P}$ on $E_P$, they fix the set $\{Q_1,\ldots,Q_k\}$. Therefore, we can lift them to involutions of  the core surface $S_2$ (Definition \ref{def23}), or equivalently, 
we have 
$${\rm Inv}^-(S,P)\subset {\rm Aut}\, (S_2) \, .$$

Let $f_1$ be the automorphism of $S$  in Definition \ref{def32}. Set 
$$\iota_n := f_1^{-n} \circ \iota \circ f_1^{n}\quad \text{for} \quad n \in \Z\,\, .$$
Then $\iota_n$ is an involution in ${\rm Inv}^-(S,P)$ and therefore, it belongs to $\Aut(S_2)$.

\begin{lemma}\label{lem42}
Let $H$ be any subgroup of $\Aut(S_2)$ which contains $\iota_n$ for infinitely many $n \in \Z$ (the group  $\Aut(S_2)$ and the group $H$ in Lemma \ref{l:real-aut} below satisfy this property). Then these automorphisms $\iota_n$ define infinitely many conjugacy classes in $H$.
\end{lemma}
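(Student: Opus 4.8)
The plan is to establish the contrapositive quantitative statement: if $\iota_n$ and $\iota_m$ are conjugate in $H$, then $|n-m|$ is bounded by a constant that does not depend on $n,m$. Granting this, I would finish as follows. Since conjugacy is an equivalence relation, every conjugacy class of $H$ meeting the family $\{\iota_n\}$ consists of maps $\iota_n$ whose indices are pairwise within the bound, hence lie in an interval of bounded length; such an interval contains only finitely many integers. As $H$ contains $\iota_n$ for infinitely many $n$ by hypothesis, the family must therefore split into infinitely many conjugacy classes, which is the assertion.

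To get the bound I would start from a conjugation $g\circ\iota_n\circ g^{-1}=\iota_m$ with $g\in H$. Writing $\iota_n=f_1^{-n}\circ\iota\circ f_1^{n}$ and $\iota_m=f_1^{-m}\circ\iota\circ f_1^{m}$ and setting $h:=f_1^{m}\circ g\circ f_1^{-n}$, a one-line rearrangement gives $h\circ\iota\circ h^{-1}=\iota$. Thus $h$ lies in the centralizer ${\rm C}_{\Aut(S)}(\iota)$, which is finite by Lemma \ref{lem41}; moreover $h\in\Aut(S,P)$ since $f_1,g\in\Aut(S,P)$. So $h$ ranges over a fixed finite subset of $\Aut(S,P)$, and $g=f_1^{-m}\circ h\circ f_1^{n}$.

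The key step is to apply the homomorphism $r_{C,P}\colon\Aut(S,P)\to\C^\times$. Because $f_1|_C(x)=2x$ and $P=\infty$, in the local coordinate $1/x$ at $P$ the map $f_1|_C$ has derivative $1/2$, so $r_{C,P}(f_1)=1/2$, a number of infinite order in $\C^\times$. Multiplicativity of $r_{C,P}$ then gives
\[
r_{C,P}(g)=r_{C,P}(f_1)^{\,n-m}\,r_{C,P}(h)=2^{\,m-n}\,r_{C,P}(h).
\]
Now $r_{C,P}(h)$ lies in a finite set (as $h$ runs through a finite set), while $r_{C,P}(g)$ lies in the finite set $r_{C,P}(\Aut(S_2))$ by Lemma \ref{l:S2-EP}, since $g\in H\subset\Aut(S_2)$. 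Hence $2^{\,m-n}$ takes only finitely many values, which forces $m-n$ into a fixed finite set of integers. This is precisely the required bound.

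I expect the only delicate points to be bookkeeping rather than geometry: confirming that the conjugating element $g$ and the auxiliary element $h$ genuinely lie in the groups on which $r_{C,P}$ is finite — namely $\Aut(S_2)$ for $g$ (Lemma \ref{l:S2-EP}) and the finite centralizer for $h$ (Lemma \ref{lem41}) — and verifying that $r_{C,P}(f_1)$ has infinite order, so that distinct values of $m-n$ are actually detected by $r_{C,P}$. All the substantive input is supplied by the two cited lemmas.
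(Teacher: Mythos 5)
Your proposal is correct and follows essentially the same route as the paper: the same reduction of a conjugation $g\circ\iota_n\circ g^{-1}=\iota_m$ to an element $h=f_1^{m}\circ g\circ f_1^{-n}$ of the finite centralizer ${\rm C}_{\Aut(S)}(\iota)$ (Lemma \ref{lem41}), followed by applying $r_{C,P}$ and comparing the resulting power of $2$ against the finite set $r_{C,P}(\Aut(S_2))$ from Lemma \ref{l:S2-EP}. Your explicit computation $r_{C,P}(f_1)=1/2$ and the uniform bound on $|n-m|$ are just slightly more detailed versions of the bookkeeping the paper leaves implicit.
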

\begin{proof}
Assume that $\iota_n$ and $\iota_m$ are conjugate in $H$ for some $m, n \in \Z$. So there is $h\in H$ such that 
$$\iota_n = h^{-1} \circ \iota_m \circ h \, .$$ 
We have
$$f_1^{-n} \circ \iota \circ f_1^{n} = h^{-1} \circ f_1^{-m} \circ \iota \circ f_1^{m} \circ h\,\, .$$
It follows that  $f_1^{m} \circ h \circ f_1^{-n}$ belongs to ${\rm C}_{\Aut(S)}(\iota)$. So it belongs to the group 
$$\Gamma:={\rm C}_{\Aut(S)}(\iota)\cap \Aut(S,P) \, ,$$
 or equivalently, $h$ belongs to  the set 
$f_1^{-m} \Gamma f_1^{n}$.

Consider the representation 
$$r_{C, P} : {\rm Aut}\, (S, P) \to \C^{\times}$$ 
defined after Definition \ref{def21}. By Lemma \ref{lem41}, the group $\Gamma$ is finite. Therefore, its image by 
$r_{C, P}$ is a finite subset of $\C^\times$ that we will denote by $K_1$. 
Recall that in the coordinate $x$ on the curve $C$ that we used above, the map $f_1$ satisfies $f_1|_C(x) = 2x$. Hence
$r_{C, P}(h)$ belongs to the set  $2^{n-m}K_1$.

Finally, by Lemma \ref{l:S2-EP}, $r_{C, P}(h)$ belongs to a finite set that will be denoted by $K_2$. Therefore, $r_{C, P}(h)$ belongs to
$2^{n-m}K_1\cap K_2$.
As $K_1$ and $K_2$ are finite subsets of $\C^\times$, for each $n$, there are only finitely many $m$ such that $2^{n-m}K_1\cap K_2$ is non-empty. 
So for each $n$, there are only finitely many $m$ such that $\iota_m$ is conjugate to $\iota_n$ in $H$. The lemma then follows.  
\end{proof}

From now on, we assume further properties on our very special triple $(S,C,P)$ and the family of points $Q_1,\ldots, Q_k$  in {\bf (S1)}.

\medskip\noindent
{\bf (S2)} We specify the affine Weierstrass equation of $F$ with origin at $\infty$ as 
$$y'^2=x'(x'-1)(x'-\lambda) \quad \text{with} \quad (x',y')\in\C^2 \, ,$$
where $\lambda\in\R\setminus\{0,1\}$ such that $E$ and $F$ are not isogenous.

\medskip

Note that such a $\lambda$ exists because there are at most  countably many values of $\lambda$ which do not satisfy the last property. 
Note also that both $E$ and $F$  are defined over $\R$. Thus, the surface $S$ is defined over $\R$ as well.

In the coordinates $(x,y)$, the 2-torsion points of $E$ are $(0,0), (1,0), (2,0)$ and $(\infty,\infty)$. They are all real points. 
Similarly, in the coordinates $(x',y')$, the 2-torsion points of $F$ are $(0,0), (1,0), (\lambda,0)$ and $(\infty,\infty)$. They are also real points. 
It follows that all curves in Figure 1 are defined over $\R$ and their intersections are real points. 
In particular, the surface $S_1$ and the curve $E_P$ are defined over $\R$; the points $P'$ and $P''$ are real.

Since $\iota_{E_P}$ is the unique involution of $E_P$ which fixes $P'$ and $P''$, it is also defined over $\R$. Thus, the following choice is realizable.

\medskip\noindent
{\bf (S3)} We choose the set $\{Q_1,\ldots,Q_k\}$, invariant by $\iota_{E_P}$, in the real part of $E_P\setminus \{P',P''\}$. As a consequence, the core surface $S_2$ is defined over $\R$.

\medskip 

Observe that 
$${dx\over y }\wedge {dx'\over y' }$$
defines a holomorphic 2-form on $E\times F$. It induces a holomorphic 2-form $\omega_S'$ on $S$ which is proportional to $\omega_S$ because $H^0(S,\Omega_S^2)=\C\omega_S$.

\medskip\noindent
{\bf (S4)}  We take $\omega_S:=\omega_S'$. In particular, the form $\omega_S$ is  also defined over $\R$. 

\begin{lemma} \label{l:real-aut}
Let 
$$H := \big\{f \in \Aut(S_2)\, |\, f^*(\omega_S)=\pm \omega_S \big\}\,\, .$$
Then $H$ contains the set $\Inv^-(S,P)$ and is a finite-index subgroup of $\Aut(S_2)$. Moreover, all elements of $H$ are defined over $\R$. 
\end{lemma}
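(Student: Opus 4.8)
The plan is to prove the three assertions of Lemma \ref{l:real-aut} separately, treating the membership $\Inv^-(S,P)\subset H$ and the real-definability as essentially formal, and concentrating the real work on the finite-index statement. First I would observe that the inclusion $\Inv^-(S,P)\subset H$ is immediate: by property \textbf{(S1)} we already know $\Inv^-(S,P)\subset \Aut(S_2)$, and by definition every $f\in\Inv^-(S,P)$ satisfies $f^*\omega_S=-\omega_S$, so such $f$ lies in $H$.

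Next I would establish that $H$ has finite index in $\Aut(S_2)$. The natural tool is the canonical representation $\rho:\Aut(S)\to\C^\times$, $f\mapsto\alpha(f)$ with $f^*\omega_S=\alpha(f)\omega_S$, which by \cite[Th.14.10]{Ue75} has finite image. Restricting $\rho$ to $\Aut(S_2)\subset\Aut(S)$, the image is still finite, and $H$ is precisely the preimage under $\rho|_{\Aut(S_2)}$ of the finite subgroup $\{1,-1\}\cap\im(\rho|_{\Aut(S_2)})$. Thus
$$[\Aut(S_2):H]\le |\im(\rho|_{\Aut(S_2)})|<\infty\,\, ,$$
so $H$ is a finite-index subgroup, as claimed.

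The remaining point is that every element of $H$ is defined over $\R$; this is where I expect the main subtlety to lie, since it requires translating the analytic/Hodge-theoretic description of automorphisms into arithmetic over $\R$. By properties \textbf{(S2)}, \textbf{(S3)}, \textbf{(S4)}, the surface $S_2$, the curve $C$, the configuration in Figure \ref{fig1}, the point $P$, and the $2$-form $\omega_S$ are all defined over $\R$; complex conjugation therefore acts on $S_2$ (and compatibly on $S$) as an antiholomorphic involution $\sigma$. Given $f\in H$, I would show that the conjugate $\bar f:=\sigma\circ f\circ\sigma$ is again an automorphism of $S_2$ and satisfies the same defining relation. The key is that, because $\omega_S$ is real, $\bar f$ acts on $\omega_S$ by $\overline{\alpha(f)}=\pm 1=\alpha(f)$, and on $\Pic(S)$ by the conjugate of $f^*$. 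Since $\Pic(S)$ is spanned by the real curves of Figure \ref{fig1}, $\sigma$ acts trivially on $\Pic(S)$, so $\bar f^*=f^*$ on $\Pic(S)$. By the global Torelli theorem for K3 surfaces (as used in Lemma \ref{lem31}), an automorphism is determined by its action on $H^2(S,\Z)$ together with its effect on $\omega_S$; since $\bar f$ and $f$ agree on both, we get $\bar f=f$, i.e. $f$ is fixed by conjugation and hence defined over $\R$.

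The main obstacle, and the step demanding the most care, is the Torelli argument for real-definability: one must verify that $\sigma$ genuinely acts as the identity on the full lattice $H^2(S,\Z)$ in the relevant sense and control its action on the transcendental lattice where $\omega_S$ lives, so that the equality of Hodge-theoretic data for $f$ and $\bar f$ really forces $\bar f=f$ as morphisms. Here I would lean on the fact that $\omega_S$ is defined over $\R$ (property \textbf{(S4)}), which pins down the behaviour of $\sigma$ on $H^{2,0}$, combined with the triviality of $\sigma^*$ on $\Pic(S)$ coming from the real structure of the $24$ rational curves. Once these two compatibilities are in hand, the Torelli theorem closes the argument exactly as in the proof of Lemma \ref{lem31}, and the lemma follows.
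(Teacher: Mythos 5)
Your proposal is correct and takes essentially the same route as the paper's proof: the containment $\Inv^-(S,P)\subset H$ and the finite-index claim via the finiteness of the canonical representation are handled identically, and real-definability is proved the same way, by showing that $c\circ f\circ c$ and $f$ agree on $\omega_S$ (using (S4) and the fact that $\alpha(f)=\pm1$ is real) and on ${\rm NS}(S)$ (using the $24$ real curves), then invoking the global Torelli theorem as in Lemma \ref{lem31}. The only caveat is that your intermediate sentence claiming $f$ and $\sigma\circ f\circ\sigma$ agree on all of $H^2(S,\Z)$ overstates what is known at that stage --- agreement on the rank-$18$ lattice ${\rm NS}(S)$ together with $\omega_S$ must first be propagated to the transcendental lattice via its minimality, exactly the step borrowed from Lemma \ref{lem31} that your final paragraph correctly identifies as the point requiring care.
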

\proof
Recall that $\Aut(S_2)$ is identified to a subgroup of $\Aut(S)$.  It is clear that $H$ is a finite index subgroup of $\Aut(S_2)$ by the finiteness of the canonical representation of $\Aut(S)$. The fact that $H$ contains $\Inv^-(S,P)$ is  also clear by definition of the later set. Consider an arbitrary element $f$ of $H$. It remains to show that it is defined over $\R$.

Since $S$ is defined over $\R$, the complex conjugation on the complex number field induces an anti-holomorphic involution $c:S\to S$. We need to check that $c\circ f\circ c=f$. Observe that  $c\circ f\circ c$ is holomorphic. So it belongs to $\Aut(S)$.

By (S4), we have $c^*\omega_S=\overline\omega_S$ and $c^*\overline\omega_S=\omega_S$. Since $f$ is an element of $H$, we also have $f^*(\overline\omega_S)=\pm\overline\omega_S$ and hence  $(c\circ f\circ c)^*\omega_S=f^*\omega_S$.

Let $R$ be any curve among $24$ curves in Figure 1. Since $R$ is defined over $\R$, we have $c^*(R) = R$. Recall that these 24 curves  generate ${\rm NS}\, (S) \otimes \Q$ and ${\rm NS}\, (S)$ is torsion free. It follows that $c^*({\rm NS}\, (S)) \subset {\rm NS}\, (S)$ and moreover
$$c^*|_{{\rm NS}\, (S)} = \id_{{\rm NS}\, (S)}\,\, .$$ 
Hence $(c \circ f \circ c)^* = f^*$ on ${\rm NS}\, (S)$ as well. Now, as in the proof of  Lemma \ref{lem31}, by applying the global Torelli theorem, we get  $c\circ f\circ c=f$. This ends the proof of the lemma.
\endproof

\medskip\noindent
{\bf (S5)} In the proof of Theorem \ref{thm21}, we choose the variety $M$ of dimension $m$ so that it is defined over $\R$.  In particular, the variety $V:=S_2\times M$ is also defined over $\R$.

\medskip\noindent
{\it End of the proof of Theorem \ref{thm1}.} 
Define $V:=S_2$ for $d=2$, and $V:=S_2\times M$ for $d\geq 3$ and $m:=d-2$. 
First, observe that the Kodaira dimension of $S_2$ is 0 and 
the Kodaira dimension of $M$ is $d-2$. It follows that the Kodaira dimension of $S_2\times M$ is also equal to $d-2$, see \cite[p.69]{Ue75}. So the assertion about the Kodaira dimension of $V$ in Theorem \ref{thm1} holds. 
By Theorem \ref{thm21}, the properties (1) and (2) of Theorem \ref{thm1} are true as well.

Let $H$ be the group in Lemma \ref{l:real-aut}. Observe that the involution $\iota_n$ belongs to $H$ for every $n$. 
By Lemma \ref{lem42}, $H$ contains infinitely many conjugacy classes of involution.  
Define $G:=H$ for $d=2$ and $G:=H\times \{\id_M\}$ for $d\geq 3$. So $G$ contains infinitely many of conjugacy classes of involution.

Since $\Aut(M)$ is finite and $\Aut(V)=\Aut(S_2)\times \Aut(M)$, by Lemma \ref{l:real-aut}, $G$ is a finite-index subgroup of $\Aut(V)$. 
Also by the same lemma, all elements of $G$ are invariant under the action induced by the complex conjugation. Now, by \cite[Lemma 13]{Le17}, $V$ admits infinitely many real forms which are mutually non-isomorphic over $\R$. So the property (3) in Theorem \ref{thm1} is satisfied. This completes the proof of the theorem.
\hfill $\square$

\begin{remark} \label{r:field} \rm
Instead of the choice in (S2), we can choose an elliptic curve $F$ defined over $\Q$, non-isogenous to $E$, so that its 2-torsion points are all rational. For example, we can take the normalization of the singular curve $E_\eta'$ from \cite{COV15}.  This choice insures that the main objects in the proof of Theorem \ref{thm1}, namely, the core surface $S_2$, the involutions $\iota_n$  and the variety $M$, are (or can be chosen to be) defined over $\Q$. As a consequence, Theorem \ref{thm1}(1)-(2) holds for these $S_2$ and $S_2 \times M$ over any field of characteristic 0 and in Theorem \ref{thm1}(3), we can replace $\R$ by any field $K$ of characteristic 0 and $\C$ by any quadratic extension of $K$, as in \cite{Le17}.
\end{remark}




\end{document}